\numberwithin{equation}{section}
\newtheorem{theorem}{Theorem}[section]
\newtheorem{lemma}[theorem]{Lemma}
\newtheorem{question}[theorem]{Question}
\theoremstyle{definition}
\newtheorem{remark}[theorem]{Remark}
\newtheorem{definition}[theorem]{Definition}
\newcommand{\NN}{\mathbb{N}}
\newcommand{\ZZ}{\mathbb{Z}}
\newcommand{\RR}{\mathbb{R}}
\renewcommand{\SS}{\mathbb{S}}
\DeclareMathOperator{\pr}{pr}
\DeclareMathOperator{\graph}{graph}
\DeclareMathOperator{\length}{length}
\newcommand{\iG}{\mathcal{G}}
\title{Continuous horizontally rigid functions of two variables are affine}
\author{Rich\'ard Balka\thanks{Partially supported by the
Hungarian Scientific Foundation grant no.~72655.}\\
Alfr\'ed R\'enyi Institute of Mathematics\\
PO Box 127, 1364 Budapest, Hungary\\
and \\
Eszterh\'azy K\'aroly College\\
Institute of Mathematics and Informatics\\
Le\'anyka u. 4., 3300 Eger, Hungary \\
email: balkar@cs.elte.hu \\
\\
M\'arton Elekes\thanks{Partially supported by the Hungarian
Scientific Foundation grants no.~72655, 61600, 83726 and J\'anos
Bolyai Fellowship.
\newline\indent {\it 2000 Mathematics Subject
Classification:} Primary 26A99 Secondary 39B22, 39B52, 39B72, 51M99.
\newline\indent {\it Keywords:} rigid, functional equation,
transformation.}\\
Alfr\'ed R\'enyi Institute of Mathematics\\
PO Box 127, 1364 Budapest, Hungary\\
and\\
E\"otv\"os Lor\'and University\\
Institute of Mathematics\\
P\'azm\'any P\'eter s. 1/c, 1117
Budapest, Hungary\\
email: emarci@renyi.hu\\
{\tt www.renyi.hu/\hbox{$\sim$}emarci}}
\begin{document}

\maketitle

\begin{abstract}

Cain, Clark and Rose defined a function $f\colon \RR^n \to \RR$ to
be \emph{vertically rigid} if $\graph(cf)$ is isometric to $\graph
(f)$ for every $c \neq 0$.
It is \emph{horizontally rigid} if $\graph(f(c \vec{x}))$
is isometric to $\graph (f)$ for every $c \neq 0$ (see \cite{CCR}).

In \cite{BE} the authors of the present paper settled Jankovi\'c's conjecture by showing that a continuous
function of one variable is vertically rigid if and only if it is of the form
$a+bx$ or $a+be^{kx}$ ($a,b,k \in \RR$). Later they proved in \cite{BE2} that a continuous
function of two variables is vertically rigid if and only if after a suitable
rotation around the $z$-axis it is of the form
$a + bx + dy$, $a + s(y)e^{kx}$ or $a + be^{kx} + dy$ ($a,b,d,k \in \RR$, $k
\neq 0$, $s : \RR \to \RR$ continuous). The problem remained open in higher
dimensions.

The characterization in the case of horizontal rigidity is
surprisingly simpler. C. Richter proved in \cite{Ri} that a
continuous function of one variable is horizontally rigid if and
only if it is of the form $a+bx$ ($a,b\in \RR$). The goal of the
present paper is to prove that a continuous function of two
variables is horizontally rigid if and only if it is of the form $a
+ bx + dy$ ($a,b,d \in \RR$). This problem also remains open in
higher dimensions.

The main new ingredient of the present paper is the use of functional equations.

\end{abstract}

\section{Introduction}

Cain, Clark and Rose introduced the notion of vertical
and horizontal rigidity in \cite{CCR}, which we now formulate for functions of
several variables.

\begin{definition} A function $f\colon \RR^n \to \RR$ is called
\emph{horizontally} or \emph{vertically rigid}, if $\graph(f(c
\cdot))$ or $\graph(cf)$ is isometric to $\graph (f)$ for all $c \in
(0,\infty)$, respectively. (Clearly, $c \in \RR \setminus \{0\}$
would be the same.)
\end{definition}

\begin{definition}  If $C$ is a subset of $(0, \infty)$ and $\iG$ is a
set of isometries of $\RR^{n+1}$ then we say that $f\colon
\RR^{n}\to \RR$ is horizontally or vertically rigid \emph{for a set
$C \subseteq (0, \infty)$ via elements of $\iG$} if for every $c \in
C$ there exists a $\varphi \in \iG$ such that $\varphi(\graph(f)) =
\graph (f(c \cdot))$ or $\varphi(\graph(f)) = \graph (cf)$,
respectively.

(If we do not mention $C$ or $\iG$ then $C$ is $(0, \infty)$ and
$\iG$ is the set of all isometries.)
\end{definition}

We characterized the continuous vertically rigid
functions of one variable in \cite{BE}, thus proving the conjecture of
D.~Jankovi\'c.

\begin{theorem}[Jankovi\'c's Conjecture] A continuous function $f\colon \RR \to \RR$ is vertically rigid if
and only if it is of the form $a+bx$ or $a+be^{kx}$ ($a,b,k \in
\RR$, $k \neq 0$).
\end{theorem}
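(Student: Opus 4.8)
The plan is to prove the two directions separately; the converse is the hard one.

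\emph{The easy (``if'') direction.} I would exhibit the isometries explicitly. For $f(x)=a+bx$ both $\graph(f)$ and $\graph(cf)=\graph(ca+cbx)$ are straight lines, and any two lines are congruent, so a rotation through $\arctan(cb)-\arctan(b)$ followed by a translation works. For $f(x)=a+be^{kx}$ the horizontal translation $x\mapsto x-\tfrac{\ln c}{k}$ multiplies $be^{kx}$ by $c$ while leaving the constant $a$ untouched, so composing it with the vertical translation $y\mapsto y+(c-1)a$ sends $\graph(f)$ onto $\graph(cf)$. Thus in the affine case the witnessing isometry rotates, whereas in the exponential case it is a pure translation; this contrast is the signpost for the converse.

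\emph{Reductions for the (``only if'') direction.} Assume $f$ is continuous and vertically rigid. A constant $f$ is already affine, so assume $f$ is nonconstant; I would first show it is unbounded. An isometry only permutes directions while preserving the corresponding extents (widths) of a set, whereas $S_c\colon(x,y)\mapsto(x,cy)$ multiplies the vertical extent by $c$. If $f$ were bounded with range of length $R>0$, then $\graph(f)$ would have finite extent in exactly one direction, the vertical, with value $R$, and likewise $\graph(cf)=S_c\graph(f)$ would have a unique finite extent $cR$; since $\graph(f)\cong\graph(cf)$ these finite values must coincide, forcing $c=1$, a contradiction. Hence $f$ is unbounded. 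Next, for each $c$ I write the witnessing isometry as $\varphi_c(x,y)=(x\cos\theta_c\mp y\sin\theta_c+p_c,\ \pm x\sin\theta_c+y\cos\theta_c+q_c)$, set $X_c(x):=x\cos\theta_c\mp f(x)\sin\theta_c+p_c$, and prove the structural lemma that $X_c$ is a homeomorphism of $\RR$ (it is continuous; by injectivity of $\varphi_c$ it is injective, hence strictly monotone; and it is onto because $\graph(cf)$ covers every vertical line). Expressing that $\varphi_c(x,f(x))$ lies on $\graph(cf)$ then yields the functional equation
\[
\pm x\sin\theta_c+f(x)\cos\theta_c+q_c=c\,f\bigl(X_c(x)\bigr)\qquad(x\in\RR).
\]

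\emph{The core and the main obstacle.} It remains to extract the form of $f$ from the mere existence, for every $c>0$, of a solution $(\theta_c,p_c,q_c)$ of this equation; the essential difficulty is that rigidity does not let us choose the isometry, so a priori $\theta_c$ could jump around with $c$. I would organise the analysis by the asymptotic shapes of the two ends of $\Gamma=\graph(f)$ as $x\to\pm\infty$, which an isometry must match between $\Gamma$ and its vertically stretched copy $S_c\Gamma$. If neither end is asymptotically horizontal, the goal is to show that reconciling the (rescaled by $S_c$) asymptotic directions with an isometry simultaneously for all $c$ is possible only when $\Gamma$ is a straight line, giving $f$ affine. If one end is asymptotically horizontal, say with asymptote $y=a$, this direction is preserved by $S_c$, the matching forces the rotation part to fix the horizontal direction and hence (after disposing of the antipodal/reflecting cases) $\theta_c\equiv0$, the equation collapses to $cf(x+p_c)=f(x)+q_c$, and subtracting $a$ reduces it to $c\,g(x+p_c)=g(x)$ for $g=f-a$; continuity then upgrades this one-parameter multiplicative relation to $g(x)=be^{kx}$, i.e.\ $f=a+be^{kx}$. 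The genuinely hard steps are making ``asymptotic shape of an end'' rigorous for a merely continuous (possibly oscillating) $f$, controlling the $c$-dependence of $(\theta_c,p_c,q_c)$ together with the orientation-reversing cases, and solving the resulting functional equation with no regularity beyond continuity; continuity is precisely the hypothesis that collapses the a priori enormous family of candidate isometries onto the two rigid patterns above.
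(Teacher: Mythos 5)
This theorem is not proved in the present paper at all: it is imported from the authors' earlier work \cite{BE}, so there is no in-paper argument to measure yours against. Judged on its own merits, your ``if'' direction is correct, and your preliminary reduction is a genuinely nice self-contained observation: a bounded nonconstant continuous $f$ has exactly one direction of finite extent (the vertical one, of width $R$), the stretch $(x,y)\mapsto(x,cy)$ turns this into $cR$, and isometries preserve the multiset of directional extents, so $R=cR$ forces $c=1$. The derivation of the functional equation $\pm x\sin\theta_c+f(x)\cos\theta_c+q_c=c\,f(X_c(x))$ and the fact that $X_c$ is a homeomorphism are also fine.

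The converse, however, is not actually proved: everything after ``The core and the main obstacle'' describes what a proof would have to do rather than doing it, and the two steps you defer are precisely the content of the theorem. First, your case split by ``asymptotic shape of the two ends'' is not well defined for a general continuous $f$ --- a function such as $f(x)=x\sin x$ has no asymptotic direction at either end --- so the trichotomy organizing your argument is not known to be exhaustive, and the claim that non-horizontal asymptotics force $\graph(f)$ to be a line is unsupported. Second, even granting $\theta_c\equiv 0$, the relation $c\,g(x+p_c)=g(x)$ for all $c>0$ does not immediately give $g(x)=be^{kx}$: one must rule out periodicity of $g$ to get $p_{cc'}=p_c+p_{c'}$, and then exclude the pathological (non-measurable) homomorphisms $c\mapsto p_c$ before Cauchy's functional equation yields $p_c=-\tfrac{\ln c}{k}$; this extraction of regularity from the continuity of $g$ alone is a real argument, not a formality. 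Until the case analysis is made exhaustive and these functional equations are solved, the hard direction remains a plan rather than a proof.
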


C. Richter described the continuous horizontally rigid
functions in dimension one in \cite{Ri}.

\begin{theorem}[C. Richter] \label{1dim}
A continuous function $f\colon \RR \to \RR$ is horizontally rigid if
and only if it is of the form $a+bx$ ($a,b \in \RR$). (It is
actually enough to assume that $f$ is horizontally rigid for a set
$C \subseteq (0, \infty) \setminus \{1\}$ of at least $2$ elements.)
\end{theorem}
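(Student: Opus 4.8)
The ``if'' direction is immediate: if $f(x)=a+bx$ then both $\graph(f)$ and $\graph(f(c\,\cdot\,))=\{(x,a+bcx)\}$ are straight lines, and any two lines in $\RR^2$ are isometric. So I focus on the ``only if'' direction, using horizontal rigidity for the factors in $C$ together with continuity. Fix $c\in C$ with $c\neq 1$ and let $\varphi$ be an isometry with $\varphi(\graph f)=\graph(f(c\,\cdot\,))$. Writing $L_c(x,y)=(x/c,y)$ one has $\graph(f(c\,\cdot\,))=L_c(\graph f)$, so $G:=\graph f$ is invariant under the affine bijection $\Phi:=\varphi^{-1}\circ L_c$. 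If $\varphi(v)=Av+t$ with $A$ orthogonal, the linear part of $\Phi$ is $M=A^{-1}\mathrm{diag}(1/c,1)$, whence $M^{\top}M=\mathrm{diag}(1/c^2,1)$. Thus $M$ has singular values $1$ and $1/c$, its pre-image singular directions are exactly the coordinate axes, and $|\det M|=1/c\neq 1$. Replacing $\Phi$ by $\Phi^{-1}$ if necessary (it also preserves $G$), I may assume $\Phi$ is area contracting, i.e. its linear part has singular values $1$ and $\lambda$ with $0<\lambda<1$. The problem thus reduces to: a closed set $G$ that is the graph of a continuous function on $\RR$ and is invariant under an area-contracting affine map must be a line.

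The plan is to exploit this self-affinity in two complementary ways. First, parametrising $G$ by the first coordinate and letting $\psi\colon\RR\to\RR$ be the monotone continuous reparametrisation induced by $\varphi$, preservation of distances yields, for all $s,t$,
\[
(s-t)^2+\bigl(f(s)-f(t)\bigr)^2=\bigl(\psi(s)-\psi(t)\bigr)^2+\bigl(f(c\psi(s))-f(c\psi(t))\bigr)^2 .
\]
This functional equation ties the increments of $f$ to those of the rescaled $f$; once the affine structure of $\Phi$ is used to show that $\psi$ is itself affine, the equation should pin down the second differences of $f$ and force it to be midpoint-affine, hence affine by continuity. Second, and more robustly, I would run a blow-down/blow-up argument: since $\|M\|=1$ while $\|M^{-1}\|=1/\lambda>1$, the iterates $\Phi^{n}$ contract the most-contracted (axis) direction geometrically, and closedness of $G$ together with the vertical-line-test property of a function graph forces the $\Phi$-orbit of any three points of $G$ to collapse onto one line; iterating in the expanding sense then propagates straightness across all of $G$.

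The genuinely hard part is twofold. We only have continuity, so derivatives and curvature are unavailable and every step must be phrased through distances, betweenness and closedness directly; this is precisely what rules out the naive idea of comparing the curvatures of $G$ and $L_c G$. More seriously, the spectral analysis has a degenerate case: $M$ may have an eigenvalue of modulus exactly $1$ (for instance when $A$ is trivial and $M=\mathrm{diag}(1/c,1)$, so that $\varphi$ is a mere translation), in which case $\Phi$ need not have a fixed point and its iterates need not contract toward a point, breaking the convergence in the blow-down argument. In this degenerate situation $G$ is only constrained by a self-similarity such as $f(cx)=f(x-t_1)+t_2$, which does admit non-affine continuous solutions on a half-line, so the fact that $f$ is defined and continuous on all of $\RR$ must be used essentially.

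Resolving that degeneracy is, I expect, exactly why the hypothesis provides a set $C$ of at least two elements. Choosing a second ratio $c'\in C\setminus\{1,c\}$ gives a second invariance $\Phi'$ of $G$ with linear part $M'=A'^{-1}\mathrm{diag}(1/c',1)$, and the composite $\Phi'\circ\Phi^{-1}$ is again a self-affinity of $G$ whose linear part has singular values $c/c'\neq 1$ and $1$ but whose principal axes are the $A$-rotated axes rather than the coordinate axes. Two area-contracting self-affinities of a single function graph with distinct contraction directions cannot coexist unless the graph is straight, so comparing $\Phi$ and $\Phi'$ eliminates the degenerate axis-aligned configurations and forces $G$ to be a line, i.e. $f$ to be of the form $a+bx$. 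Carrying out this comparison rigorously, with only continuity in hand, is the crux of the argument.
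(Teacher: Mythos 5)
First, a point of reference: the paper does not prove this theorem at all --- it is quoted from Richter's paper [Ri] and used as a black box (only the translation case, [Ri, Prop.~3], reappears as Theorem~\ref{trans}). So your attempt can only be measured against what a complete proof would require, and by that measure it is a plan rather than a proof. The setup is sound and matches the natural framing: $\graph(f(c\,\cdot))=L_c(\graph f)$ with $L_c(x,y)=(x/c,y)$, so $\graph f$ is invariant under the affine map $\Phi=\varphi^{-1}\circ L_c$ whose linear part has singular values $1$ and $1/c$ with right singular directions the coordinate axes; and you correctly identify the translation-degenerate case as the delicate one (on a half-line $f(cx)=f(x-t_1)+t_2$ has non-constant continuous solutions, so continuity on all of $\RR$ must enter --- this is exactly the content of Theorem~\ref{trans}, which, note, needs only \emph{one} $c\neq 1$, so your guess about why two elements of $C$ are required is not obviously the right explanation).

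The genuine gap is that every load-bearing step is asserted rather than proved, and at least one is false as stated. The sentence ``two area-contracting self-affinities of a single function graph with distinct contraction directions cannot coexist unless the graph is straight'' \emph{is} the theorem; nothing in the proposal establishes it. The blow-down argument does not work as described: singular values do not compose multiplicatively, so from $\sigma(M)=\{1,\lambda\}$ you may conclude $\|M^n\|\le 1$ and $|\det M^n|=\lambda^n$, hence that \emph{some} direction is contracted by at least $\lambda^{n/2}$, but that direction need not be the axis and can rotate with $n$; the claimed geometric collapse of three points of $G$ onto a line does not follow. In the functional-equation branch, the reparametrisation $\psi$ induced by $\varphi$ is a priori only a homeomorphism of $\RR$; you give no argument that it is affine, and without that the displayed distance identity does not control second differences. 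Finally, the case analysis is incomplete: you must also treat orientation-reversing $\varphi$ and the rotation by $\pi$ (where $A^{-1}\mathrm{diag}(1/c,1)$ is again diagonal up to sign and the ``distinct contraction directions'' mechanism is unavailable), and the argument excluding a genuinely rotating $\varphi^{ort}$ for non-affine $f$ is missing entirely. You have framed the problem correctly, but the mathematical content of Richter's theorem remains to be supplied.
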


We characterized the continuous vertically rigid
functions of two variables in \cite{BE2}.

\begin{theorem}
A continuous function $f\colon \RR^2 \to \RR$ is vertically rigid if
and only if after a suitable rotation around the $z$-axis $f(x,y)$
is of the form $a + bx + dy$, $a + s(y) e^{kx}$ or $a + b e^{kx} +
dy$ ($a,b,d,k \in \RR$, $k \neq 0$, $s \colon \RR \to \RR$
continuous).
\end{theorem}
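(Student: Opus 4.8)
The plan for the ``if'' direction is to exhibit explicit isometries for each of the three forms. If $f = a+bx+dy$ then $\graph(cf)$ is a plane, and any two planes in $\RR^3$ are isometric. If $f = a + s(y)e^{kx}$, I would use the pure translation $\varphi_c(x,y,z) = (x - k^{-1}\ln c,\, y,\, z + (c-1)a)$; since $e^{k(x - k^{-1}\ln c)} = c^{-1}e^{kx}$, one checks directly that $\varphi_c(\graph f) = \graph(cf)$. The case $f = a + be^{kx} + dy$ is the only one needing a rotation: a rotation by angle $\theta$ in the $(y,z)$-plane carries $\graph f$ onto the graph of $\tfrac{a}{p} + \tfrac{b}{p}e^{kx} + \tfrac{q}{p}y$, where $p = \cos\theta - d\sin\theta$ and $q = \sin\theta + d\cos\theta$ (the computation is routine, the key identity being $q\sin\theta + p\cos\theta = 1$). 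Writing $d = \tan\phi$, the new linear coefficient is $\tan(\phi+\theta)$, so the tangent-addition formula lets me prescribe it to be exactly $cd$; a subsequent translation in $x$ fixes the exponential coefficient and a translation in $z$ fixes the constant. This yields $\varphi_c(\graph f) = \graph(cf)$.

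\textbf{Necessity: reduction to a PDE.} For the hard direction, I would fix for each $c$ an isometry $\varphi_c(v) = O_c v + b_c$ (with $O_c$ orthogonal) satisfying $\varphi_c(\graph f) = \graph(cf)$, normalising $\varphi_1 = \mathrm{id}$. The idea is to extract the infinitesimal generator by differentiating this relation at $c=1$. On the right, the surface $\graph(cf)$ deforms with vertical velocity $(0,0,f(x,y))$; on the left, the point $p = (x,y,f(x,y))$ moves with velocity $V(p) = \partial_c\varphi_c(p)|_{c=1}$, which is an infinitesimal isometry, i.e.\ $V(p) = Sp + w$ with $S$ skew-symmetric and $w\in\RR^3$. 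Matching \emph{normal} components of the two velocities (the tangential parts are irrelevant, as they only reparametrise), the common normalising factor $\sqrt{1+|\nabla f|^2}$ cancels and leaves the first-order quasilinear PDE
\[ \langle S(x,y,f)^{\mathsf T} + w,\ (-f_x,-f_y,1)\rangle = f. \]
Writing $S$ in terms of a skew triple $(\alpha,\beta,\gamma)$ and $w = (w_1,w_2,w_3)$, this reads $(\gamma y - \beta f - w_1)f_x + (-\gamma x + \alpha f - w_2)f_y = f + \beta x - \alpha y - w_3$.

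\textbf{Necessity: solving the PDE.} The plan is then to solve this PDE by characteristics and recover exactly the three families. First I would use the freedom of rotating about the $z$-axis: such a rotation conjugates $S$, rotating the horizontal part $(\alpha,\beta)$ of its axis while fixing $\gamma$, so after a suitable rotation I may assume $\beta = 0$. One then runs a case analysis on the surviving parameters: the degenerate cases give the plane $a+bx+dy$; the case with no genuine rotation (generator a pure translation, $S=0$) forces $f_x = k(f-a)$ along the flow and integrates to $a + s(y)e^{kx}$; and the case with a rotation about the (rotated) $x$-axis produces $a + be^{kx} + dy$, with the relation $\alpha(1+d^2) = d$ linking the rotation rate to the slope, matching the sufficiency computation above. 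Checking that these are the only solutions is the bulk of the computation but is otherwise mechanical.

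\textbf{Main obstacle.} The genuine difficulty is regularity: the hypothesis is only continuity, yet the generator argument needs both a differentiable selection $c \mapsto \varphi_c$ and differentiability of $f$. I expect this to be the crux. The selection problem requires controlling the isometry group of $\graph f$ (a coset ambiguity) so that a smooth one-parameter branch through $\varphi_1 = \mathrm{id}$ can be chosen, while the smoothness of $f$ must be bootstrapped from the functional equation $\varphi_c(\graph f) = \graph(cf)$, which locally presents $cf$ as $f$ composed with an invertible affine map determined implicitly by $f$ itself. A viable fallback, should the direct bootstrap resist, is to first prove that the normalised isometries preserve the foliation by the vertical planes $\{y = \mathrm{const}\}$, reducing each slice $x \mapsto f(x,y_0)$ to a one-dimensional vertically rigid function and invoking the one-variable characterisation to obtain the $x$-dependence $a(y_0) + b(y_0)e^{k(y_0)x}$; continuity and cross-slice consistency would then force the parameters to align into one of the three global forms.
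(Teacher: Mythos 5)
Your sufficiency direction is correct and complete: the translation for $a+s(y)e^{kx}$ and the $(y,z)$-rotation computation (the identity $q\sin\theta+p\cos\theta=1$ and the tangent-addition bookkeeping) do produce the required isometries. The necessity direction, however, is a plan rather than a proof, and the two obstacles you yourself name at the end are not technicalities to be bootstrapped away --- they are the actual content of the theorem, and one of them is provably insurmountable in the form you propose. First, the hypothesis gives, for each $c$, the mere \emph{existence} of some isometry $\varphi_c$; these are not unique (any symmetry of $\graph(f)$ can be composed in), and nothing provides a selection $c\mapsto\varphi_c$ that is even measurable, let alone differentiable at $c=1$, which your derivation of the generator $V(p)=Sp+w$ requires. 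Second, and decisively: the solution class itself contains non-differentiable functions, since in $a+s(y)e^{kx}$ the function $s$ is only assumed continuous --- it may be nowhere differentiable, so $f_y$ need not exist at any point. Hence no bootstrap argument can ever establish the smoothness of $f$ needed to write your quasilinear PDE $(\gamma y-\beta f-w_1)f_x+(-\gamma x+\alpha f-w_2)f_y=f+\beta x-\alpha y-w_3$; any proof that an arbitrary continuous vertically rigid $f$ satisfies a classical PDE in both $f_x$ and $f_y$ must be wrong, because some solutions do not admit $f_y$. At best one could hope for a degenerate version in which the coefficient of $f_y$ vanishes identically, but determining when that happens is again the hard part. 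Your fallback --- showing the normalised isometries preserve the foliation $\{y=\mathrm{const}\}$ and slicing down to the one-variable theorem --- assumes exactly the structural dichotomy that needs proving.

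For comparison: the present paper does not prove this statement but cites it from \cite{BE2}, and the method there (reused in this paper for the horizontal case) avoids all differentiability issues. One studies the set $S_f\subseteq\SS^2$ of directions spanned by pairs of points of $\graph(f)$, observes that rigidity forces $\psi_c(S_f)=\varphi_c^{ort}(S_f)$, where $\psi_c$ is the explicit deformation of the sphere and $\varphi_c^{ort}$ the orthogonal part of $\varphi_c$, classifies the finitely many possible shapes of $S_f$, and then settles each case by elementary geometric arguments, Richter-type translation theorems, and functional equations --- never differentiating anything in $c$ or in $(x,y)$, and never needing any regularity of the assignment $c\mapsto\varphi_c$.
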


The goal of the present paper is to characterize the continuous
horizontally rigid functions of two variables.

\begin{theorem}[Main Theorem] A continuous function $f\colon \RR^2 \to \RR$ is
horizontally rigid if and only if it is of the form $a + bx + dy$
($a,b,d \in \RR$).
\end{theorem}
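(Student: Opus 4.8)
The plan is to characterize functions $f$ for which $\graph(f(c\cdot))$ is isometric to $\graph(f)$ for every $c>0$. The ``if'' direction is routine: if $f(x,y)=a+bx+dy$ then $f(cx,cy)=a+cbx+cdy$, and the graph of a plane is always congruent to the graph of any other plane with the same structure, so one only needs to exhibit an explicit isometry (a suitable rotation combined with a translation) matching the two planes. The substantive content is the ``only if'' direction, and here I would follow the strategy that the abstract advertises, namely reducing rigidity to a \emph{functional equation} and then exploiting continuity.

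\medskip

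First I would set up the consequences of rigidity. For each $c>0$ there is an isometry $\varphi_c$ of $\RR^3$ with $\varphi_c(\graph f)=\graph(f(c\cdot))$. Since an isometry of $\RR^3$ is an affine orthogonal map, I would write $\varphi_c(v)=A_c v + t_c$ with $A_c\in O(3)$ and $t_c\in\RR^3$, and translate the equation $\varphi_c(\graph f)=\graph(f(c\cdot))$ into a pointwise identity. Concretely, for every $(x,y)$ the point $\varphi_c(x,y,f(x,y))$ must be of the form $(u,v,f(cu,cv))$ for the appropriate $(u,v)$ depending on $(x,y)$. The key first reduction is to understand the behavior as $c\to 1$ and as $c$ ranges over a dense set: rigidity for all $c>0$ gives a one-parameter family of isometries, and I expect that the metric constraint forces strong structure on $A_c$. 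In particular I would try to show that the ``horizontal'' part of the rescaling, which stretches the domain by $c$, is incompatible with $A_c$ being a genuine rotation mixing the $z$-axis with the horizontal plane unless $f$ is affine, since mixing would distort distances in a way inconsistent with the pure dilation on the domain.

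\medskip

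The core analytic step is to derive and solve a functional equation for $f$. Differencing the rigidity relation for two values of $c$ (or comparing $c$ and $c'$ with $cc'$, using that the composition $\varphi_{c'}\circ\varphi_c$ realizes rigidity for $cc'$) should yield an equation expressing $f(cx,cy)$ in terms of $f(x,y)$ together with linear terms coming from $A_c$ and $t_c$. I would aim for a relation of the form $f(cx,cy)=\alpha(c)f(x,y)+\beta(c)\cdot(x,y)+\gamma(c)$ or, after extracting the second-order behavior, an equation forcing the second differences (the ``curvature'') of $f$ to scale homogeneously. Continuity of $f$ then plays the role it played in the one-dimensional Richter theorem \ref{1dim}: it upgrades the functional equation from holding pointwise to pinning down $f$ up to the affine solutions, ruling out pathological measurable-but-discontinuous solutions of the underlying Cauchy-type equation.

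\medskip

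The main obstacle, which I would isolate and attack first, is controlling the orthogonal parts $A_c$. Unlike the vertical case, where the dilation acts only in the $z$-direction and the rotation freedom is limited, here the dilation acts in the horizontal $xy$-plane, so a priori $\varphi_c$ could tilt the graph arbitrarily, and one must rule out that a nontrivial tilt conspires with the horizontal stretch to preserve isometry type. I expect the decisive idea to be a rigidity-of-distances argument: pick three or more points on $\graph f$, compute how their mutual distances transform under the domain dilation by $c$, and compare with the fact that $\varphi_c$ preserves all distances. Forcing these to match for a dense set of $c$ should constrain $A_c$ to act essentially as a scaling-compatible similarity only when $f$ is affine, and otherwise produce a contradiction. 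Once $A_c$ is understood, reducing to the affine form $a+bx+dy$ should follow from the solved functional equation together with continuity.
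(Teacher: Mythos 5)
Your proposal is a plan rather than a proof, and the plan as stated has a genuine gap at precisely the point you yourself identify as ``the main obstacle'': controlling the orthogonal parts $A_c$. A ``rigidity-of-distances'' argument comparing mutual distances of a few points cannot work on its own, because the graph of a non-affine function can in principle admit such isometries (indeed, in the vertical-rigidity analogue there \emph{are} non-affine continuous rigid functions such as $a+s(y)e^{kx}$, so no soft metric argument can force affineness). What the paper actually uses to pin down $\varphi_c^{ort}$ is the set $S_f\subseteq\SS^2$ of directions between pairs of points of $\graph(f)$, together with the identity $\psi_c(S_f)=\varphi_c^{ort}(S_f)$ and a classification (imported from \cite{BE2}) of the possible shapes of $S_f$ into four cases. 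That classification is the engine of the whole proof and is absent from your outline. Without it you cannot even begin the case analysis: in some cases $\varphi_c^{ort}$ is forced to be the identity and one lands in the translation case (Theorem \ref{trans}), in another a compactness/extremal-distance argument shows $f$ is constant, and only in the remaining case does a functional equation appear.

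The second gap is in the functional equation itself. The relation $\varphi_c(x,y,f(x,y))=(u,v,f(cu,cv))$ does not directly yield a scalar equation of the form $f(cx,cy)=\alpha(c)f(x,y)+\beta(c)\cdot(x,y)+\gamma(c)$, because $\varphi_c$ mixes the vertical and horizontal coordinates. In the paper a one-variable equation $g(x)=h_cg(cx+u_c)+v_c$ only emerges after first showing (via the $S_f$ analysis) that $f(x,y)=g(x)+dy$ up to a rotation, then identifying the axis and angle of $\varphi_c^{ort}$, and then intersecting both $\varphi_c(\graph(f))$ and $\graph(f(c\cdot))$ with the $xy$-plane (Lemma \ref{rotation}). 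Moreover, solving that equation (Theorem \ref{function}) is not a routine Cauchy-equation upgrade: its continuous solutions include the genuinely non-affine family $a+b_1(d-x)^s$, $a+b_2(x-d)^s$ with $h_c=c^{-s}$, and these are excluded only by the separate observation that the explicit $h_c=\sqrt{(d^2+1)/((cd)^2+1)}$ produced by the geometry is not a power function. Your sketch, which expects continuity to ``pin down $f$ up to the affine solutions,'' would miss these solutions and the argument needed to rule them out.
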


In fact, we will prove a slightly stronger statement for which we introduce the
following definition.

\begin{definition} Let us say that a set $C \subseteq (0, \infty)$ \emph{condensates
to $\infty$} if for every $r \in \RR$ the set $C \cap (r ,\infty)$
is uncountable.
\end{definition}

\begin{theorem}[Main Theorem, technical form] \label{main}
Let $C \subseteq (0, \infty)$ be a set condensating to $\infty$.
Then a continuous function $f\colon \RR^2 \to \RR$ is horizontally
rigid for $C$ if and only if it is of the form $a + bx + dy$ ($a,b,d
\in \RR$).
\end{theorem}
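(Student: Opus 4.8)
The plan is to treat the two implications separately. The ``if'' direction is immediate: if $f(x,y)=a+bx+dy$ then $\graph(f)$ is an affine plane in $\RR^3$, and $f(c\cdot)=a+bcx+dcy$ likewise has a planar graph; since any two planes in $\RR^3$ are carried onto one another by an isometry of $\RR^3$, the graph of $f(c\cdot)$ is isometric to $\graph(f)$ for \emph{every} $c$, so $f$ is horizontally rigid for any $C$. All the content is in the converse, which I now attack.

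For the converse, fix $c\in C$ and write the witnessing isometry as $\varphi_c(p)=A_cp+t_c$ with $A_c=(a_{ij}(c))\in O(3)$ and $t_c=(t_1,t_2,t_3)\in\RR^3$. Because $\varphi_c$ maps $\graph(f)$ onto $\graph(f(c\cdot))$, applying $\varphi_c$ to the point $(x,y,f(x,y))$ and demanding that the image lie on $\graph(f(c\cdot))$ yields, with $u=a_{11}x+a_{12}y+a_{13}f(x,y)+t_1$ and $v=a_{21}x+a_{22}y+a_{23}f(x,y)+t_2$, the functional equation
\[
f\bigl(cu,\,cv\bigr)=a_{31}x+a_{32}y+a_{33}f(x,y)+t_3 .
\]
This is the central device advertised in the abstract: a one‑parameter family (indexed by $c\in C$) of functional equations in which $f$, evaluated at a point depending affinely on $(x,y,f(x,y))$ and then scaled by $c$, equals an affine combination of $x$, $y$, and $f(x,y)$. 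The orthogonality of $A_c$ supplies the extra relations among the coefficients; moreover, since graphs have everywhere a well‑defined ``upward'' unit normal (third coordinate of constant sign), the normal of $\graph(f(c\cdot))$ at $\varphi_c(p)$ equals $A_c$ applied to the normal at $p$, which forces the Gauss images of $f$ and of $f(c\cdot)$ to be congruent spherical sets for every $c\in C$.

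The heart of the argument is then to show that this system forces the graph to be flat, and finally planar. First I would establish enough regularity (a genuine tangent plane, and ultimately $C^2$‑smoothness of $f$) from continuity together with the fact that each $\varphi_c$ is a rigid motion preserving the local geometry; here the condensation of $C$ to $\infty$ is used to propagate local information. Granting smoothness, the decisive observation is how the only intrinsic second‑order invariant behaves under horizontal scaling: writing $\kappa=f_{xx}f_{yy}-f_{xy}^{2}$, the Gaussian curvature of $\graph(f(c\cdot))$ at the point over $(x,y)$ equals $c^{4}\kappa(x,y)\big/(1+c^{2}|\nabla f(x,y)|^{2})^{2}$, while that of $\graph(f)$ is $\kappa(x,y)\big/(1+|\nabla f(x,y)|^{2})^{2}$. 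Since isometric surfaces have identical curvature functions up to relabeling, the full range of these curvatures must coincide for every $c\in C$; letting $c\to\infty$ along $C$ (using that $C$ is unbounded) and examining the contribution of points where $\nabla f=0$ shows the curvature range cannot stay bounded unless $\kappa\equiv 0$, i.e.\ $\graph(f)$ is developable. I expect this step---ruling out that curvature can ``escape'' as $c\to\infty$---to be the main obstacle, and it is here that the uncountability built into the condensation hypothesis does real work.

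It remains to upgrade ``developable'' to ``planar''. A complete flat graph over all of $\RR^2$ cannot be a cone or a tangent developable (these fail to be a single‑valued $C^2$ graph over the whole plane), so it must be a cylinder $z=g(\ell(x,y))$ over some one‑variable profile $g$. Restricting the horizontal‑rigidity data to the ruling direction reduces the problem to one dimension, and Theorem~\ref{1dim} (Richter) forces the profile $g$ to be affine; a cylinder with an affine profile is a plane, so $f(x,y)=a+bx+dy$, completing the proof of Theorem~\ref{main}. As a lower‑tech substitute for the curvature step one may instead track the spherical diameter of the Gauss image: for each fixed pair of gradients the angle between the corresponding normals increases with $c$ toward the planar angle between the gradients, so constancy of the diameter across the unbounded set $C$ already constrains the gradient set severely; I would keep this as a backup in case the regularity needed for the curvature argument proves awkward to obtain directly.
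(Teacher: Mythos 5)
Your ``if'' direction and the general setup (writing $\varphi_c=A_c\cdot+t_c$ and extracting a functional equation from $\varphi_c(\graph(f))=\graph(f(c\cdot))$) are fine, but the core of your argument has two genuine gaps. First, everything after the setup rests on $f$ being $C^2$ so that normals and Gaussian curvature make sense, and you only promise to ``establish enough regularity \dots from continuity together with the fact that each $\varphi_c$ is a rigid motion.'' No mechanism for this is given, and none is apparent: a priori $f$ is merely continuous and may be nowhere differentiable, and the rigidity hypothesis transports global congruence, not local smoothness. The paper avoids this entirely by replacing the Gauss map with the set $S_f$ of directions of \emph{secant} segments between pairs of points of $\graph(f)$ (Definition \ref{d:S_f}), which is defined for every continuous $f$ and transforms under $c\mapsto f(c\cdot)$ by the explicit map $\psi_c$; the classification of the possible shapes of $S_f$ (Theorem \ref{t:cases}, imported from \cite{BE2}) does the work your curvature step is meant to do. Your ``backup'' via the Gauss image has the same defect, and in addition the claim that equality of the curvature ranges forces $\kappa\equiv 0$ is not justified (there need not be any point with $\nabla f=0$, and an unbounded curvature range could in principle be preserved).

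Second, even granting smoothness and $\kappa\equiv 0$, your final reduction is wrong as stated. A flat entire graph is a cylinder, and after a rotation about the $z$-axis one may take $f(x,y)=g(x)+dy$; but for $d\neq 0$ the ruling is not horizontal, and the horizontal scaling $(x,y)\mapsto(cx,cy)$ does \emph{not} induce plain horizontal rigidity of the profile $g$. What one actually obtains (this is the paper's Subcase A2) is the system $g(x)=h_c\,g(cx+u_c)+v_c$ with $h_c=\sqrt{(d^2+1)/((cd)^2+1)}\neq 1$, which is not covered by Richter's one-variable theorem; solving this system is precisely the content of Section \ref{s:function} and is the ``main new ingredient'' announced in the abstract, and it is there (not in a curvature estimate) that the uncountability built into the condensation hypothesis is really used. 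Only in the subcase $d=0$ does your one-line reduction to Theorem \ref{1dim} go through.
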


The proof will very heavily rely on techniques developed in \cite{BE2}. The key new ingredient will be that
functional equations enter the picture. More precisely, we will study a
specific system of functional equations in Section \ref{s:function}, which may be of interest
in its own right, and then apply it in the course of the proof of the Main Theorem.

\bigskip

Building heavily on \cite{BE2}, the structure of the proof will be as follows. In Section
\ref{s:function} we prove a theorem about the solutions of a
specific system of functional equations, which will be applied at the end of
the proof. In Section \ref{s:trans} we cite a theorem of C. Richter
which shows that if all the isometries are translations then the
continuous horizontally rigid function $f$ of arbitrary variables is
constant. We also characterize the horizontally rigid functions via
translations in arbitrary dimensions. In Section \ref{s:direction}
we consider the set $S_f$ of directions of segments connecting pairs
of points on $\graph(f)$ (see Definition \ref{d:S_f}). We cite a
theorem from \cite{BE2} to determine the possible $S_f$'s. In
Section \ref{s:end} we complete the proof by handling these cases
using various methods. Finally, in Section \ref{s:open} we collect
the open questions.

\section{A system of functional equations} \label{s:function}

In this section we explicitly solve a system of functional equations which may be interesting
in its own right. Our motivation is of course that we will apply this result for the proof of our Main Theorem.

\begin{theorem}
\label{function}
Suppose
that $C\subseteq (0,\infty)$ is an uncountable set and $u_c, v_c,
h_c \in \RR$ for all $c\in C$. Let $g\colon \RR \to \RR $ be a
continuous solution to the following system of functional equations.
\begin{equation} \label{eq:sys} g(x)=h_c g(cx+u_c)+v_c. \quad (x \in \RR, c \in C) \end{equation}
Then either $h_c = c^{-s}$ $(s \in \RR, s>0)$ for all $c\in C$ and
$g(x)=a+b_1 (d-x)^s$ for $x<d$ and $g(x)=a +b_2 (x-d)^{s}$ for
$x\geq d$ $(a,b_1,b_2,d \in \RR, (b_1,b_2)\neq (0,0))$ or $g$ is
constant.
\end{theorem}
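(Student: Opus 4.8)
The plan is to strip off the affine reparametrisation $x\mapsto cx+u_c$ one value of $c$ at a time, reduce each instance of \eqref{eq:sys} to a pure scaling relation, and then use the uncountability of $C$ to rigidify the resulting self-similar structure. First discard the (at most one) value $c=1$, so that every remaining $c\in C$ has a unique fixed point $d_c=u_c/(1-c)$ for its affine part $\phi_c(x)=cx+u_c$; the set is still uncountable. Assuming $g$ non-constant (otherwise we are done), substitute $x=d_c$ into \eqref{eq:sys} to get $v_c=(1-h_c)g(d_c)$, and feed this back to turn \eqref{eq:sys} into the centred relation $\psi_c(ct)=h_c^{-1}\psi_c(t)$, where $\psi_c(t)=g(d_c+t)-g(d_c)$ and $\psi_c(0)=0$. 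This is the classical one–variable scaling equation. Rewriting it in the logarithmic variable on each of the half–lines $t>0$ and $t<0$, continuity of $g$ forces $h_c^{-1}=c^{s_c}$ with $s_c>0$ (a nonpositive exponent would make $\psi_c$ blow up or fail to vanish at $0$), and shows $g$ is unbounded, since $\psi_c(c^n t)=c^{ns_c}\psi_c(t)$.

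Next I would show the exponent is independent of $c$. Because $\psi_c$ is bounded, and on a fundamental domain of the scaling bounded away from $0$ wherever it does not vanish, the relation $\psi_c(c^n t)=c^{ns_c}\psi_c(t)$ pins the order of growth of $g$ at infinity to the power $s_c$. This growth order is intrinsic to $g$, hence cannot depend on $c$, so $s_c\equiv s$ and $h_c=c^{-s}$ for every $c\in C$, which is the required relation between $h_c$ and $c$. With a common exponent the equation becomes $g(\phi_c(x))=c^{s}g(x)+(1-c^{s})g(d_c)$.

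The key structural step is to prove that all the fixed points coincide. Here I would compose two instances in both orders: $\phi_{c_1}\phi_{c_2}$ and $\phi_{c_2}\phi_{c_1}$ share the linear part $c_1c_2$ but have translation parts differing by a quantity $\tau$ that vanishes exactly when $d_{c_1}=d_{c_2}$. Evaluating $g$ along both composites and subtracting yields $g(y+\tau)-g(y)=\Delta$ for all $y$, with $\Delta$ a constant. If $\tau\neq0$, this makes $g$ grow at most linearly along the arithmetic progression $y+n\tau$, which is incompatible with the power growth of exponent $s\neq1$ when $\Delta\neq0$, and makes $g$ periodic, hence bounded, when $\Delta=0$; either way we contradict the previous step. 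Thus for $s\neq1$ all the $d_c$ agree at a common point $d$. The borderline case $s=1$, in which straight lines admit an arbitrary fixed point, I would treat separately, checking directly that the degree–one homogeneity relations together with the translation identities force $g$ to be affine — the claimed form with $b_1=-b_2$.

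Finally, move the common fixed point to the origin by a translation and set $\rho(t)=(g(t)-g(0))/|t|^{s}$ on each half–line. The relation $\psi(ct)=c^{s}\psi(t)$ becomes $\rho(ct)=\rho(t)$ for all $c\in C$, so in the variable $\log|t|$ the continuous function $\rho$ is periodic with every period in the uncountable set $\{\log c:c\in C\}$. The closed subgroup of periods then contains an uncountable set, so it is all of $\RR$, forcing $\rho$ to be constant on each side, say $b_2$ for $t>0$ and $b_1$ for $t<0$. Undoing the translation gives $g(x)=a+b_2(x-d)^{s}$ for $x\ge d$ and $g(x)=a+b_1(d-x)^{s}$ for $x<d$ with $a=g(d)$, exactly the asserted form. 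I expect the main obstacle to be the common–fixed–point step — both extracting a genuine contradiction from the commutator translation identity and cleanly isolating the degenerate exponent $s=1$ — whereas the elimination of the log–periodic factor is a soft consequence of $C$ being uncountable.
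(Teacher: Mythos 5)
Your architecture (centre each instance of \eqref{eq:sys} at its own fixed point $d_c$, extract an exponent $s_c$ from the resulting scaling relation, make the exponent common via the growth order of $g$ at infinity, then force a common fixed point by contradiction) is genuinely different from the paper's, which instead forms the commutator identity $g(x+u_{c_1,c_2})=g(x)+v_{c_1,c_2}$ first and splits on whether the set $U$ of commutator translations is uncountable (then $g-g(0)$ is additive on a dense subgroup, hence linear) or countable (then a pigeonhole argument shows all the maps $x\mapsto cx+u_c$ already share one fixed point, so no growth comparison is ever needed). Your $s\neq 1$ contradiction and your final log-periodicity step are sound. However, there are two genuine gaps.

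First, you assert $h_c^{-1}=c^{s_c}$ from a single scaling relation, which silently assumes $h_c>0$. For a single $c>1$ this is false: $\psi(t)=t\cos(\pi\log t/\log c)$ for $t>0$, extended by $0$ to $t\le 0$, is continuous, vanishes at $0$, and satisfies $\psi(ct)=-c\,\psi(t)$, i.e.\ $h_c=-1/c<0$; your blow-up criterion only controls $|h_c|$, not its sign. Ruling out negative $h_c$ genuinely uses the uncountability of $C$ (the paper gets it for free because a nonzero continuous multiplicative function on $(0,\infty)$ is automatically a power, hence positive). Second, and more seriously, your $s=1$ branch claims that $g$ must be affine with $b_1=-b_2$, and this is false: $g(x)=|x|$ solves $g(x)=c^{-1}g(cx)$ for every $c$, is continuous, non-constant and not affine, and has $s=1$ — it is exactly the broken-line solution $b_1=b_2=1$, $d=0$ that the theorem allows. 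So whatever direct check you intend cannot succeed as stated. The correct dichotomy at $s=1$ is: either all fixed points coincide, in which case your final step produces the (possibly non-affine) broken line; or some commutator translation $\tau\neq 0$ exists, and then affinity must actually be proved — for instance by conjugating $g(y+\tau)=g(y)+\Delta$ through the maps $x\mapsto cx+u_c$ to obtain $g(z+c\tau)=g(z)+c\Delta$ for every $c\in C$, so that the closed subgroup $\{\sigma: g(z+\sigma)-g(z)=g(\sigma)-g(0)\ \forall z\}$ is uncountable, hence all of $\RR$, making $g-g(0)$ additive and continuous. This missing argument is precisely the paper's uncountable-$U$ branch; without it (or the paper's pigeonhole lemma showing a countable $U$ must reduce to $\{0\}$) the proof is incomplete. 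Minor loose ends: the case $1\in C$ (you must still show $h_1=1$) and the consistent choice of half-line in the growth-order comparison when $g$ is constant on one side.
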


\begin{proof}
Let us introduce the function $h(c) = h_c$ for the sake of
notational simplicity.

If $g$ is constant then the proof is complete, so we may assume that
this is not the case. If there exists a $c_0\in C$ such that
$h(c_0)=0$ then \eqref{eq:sys} implies $g(x)\equiv v_{c_0}$, but
this is impossible. Therefore $h(c)\neq 0$ for all $c\in C$.
Applying \eqref{eq:sys} twice for $c_1,c_2\in C$ we obtain the
following.
\begin{align*} g(x)&=h(c_1)g(c_1x+u_{c_1})+v_{c_1}  \\
&=h(c_1)\left(h(c_2)g\left(c_2(c_1x+u_{c_1})+u_{c_2}\right)+v_{c_2}\right)+v_{c_1}  \\
&=h(c_1)h(c_2)g(c_1 c_2x+c_2 u_{c_1}+u_{c_2})+h(c_1)v_{c_2}+v_{c_1}.
\end{align*}
Interchanging $c_1$ and $c_2$ follows
\begin{align*}
g(x) & =h(c_1)h(c_2)g(c_1 c_2x+c_2
u_{c_1}+u_{c_2})+h(c_1)v_{c_2}+v_{c_1} \notag  \\
& = h(c_1)h(c_2)g(c_1 c_2x+c_1 u_{c_2}+u_{c_1})+h(c_2)v_{c_1}+v_{c_2}.
\end{align*}
Therefore
\begin{align}
\label{eq1'}
&g(c_1 c_2x+c_2
u_{c_1}+u_{c_2}) +  \frac{h(c_1)v_{c_2}+v_{c_1}}{h(c_1)h(c_2)} \notag  \\
& = g(c_1 c_2x+c_1 u_{c_2}+u_{c_1}) + \frac{h(c_2)v_{c_1}+v_{c_2}}{h(c_1)h(c_2)}.
\end{align}
Let $y=c_1 c_2x+c_1 u_{c_2}+u_{c_1}$ be the argument of $g$ in the second line, and $u_{c_1,c_2}=u_{c_1}(c_2-1)-u_{c_2}(c_1-1)$,
then the argument of $g$ in the first line becomes $c_1
c_2x+c_2 u_{c_1}+u_{c_2}=y + u_{c_1,c_2}$.

Moreover, set $v_{c_1,c_2} = \frac{v_{c_1}(h(c_2)-1)-v_{c_2}(h(c_1)-1)}{h(c_1)h(c_2)}$, then \eqref{eq1'} attains the form
\[
g(y+u_{c_1,c_2})=g(y)+v_{c_1,c_2}.
\]
Since $y$ can also be an arbitrary real number, we can replace it by $x$ and hence we obtain that for every $x
 \in \RR$ and $c_1, c_2 \in C$
\begin{equation} \label{eq2} g(x+u_{c_1,c_2})=g(x)+v_{c_1,c_2}. \end{equation}
Define
\[
U=\{u_{c_1,c_2}: c_1,c_2\in C\}.
\]
First assume that $U$ is uncountable. Let us denote by $\widehat{U}$
the additive group generated by $U$, and let $G\colon \RR\to \RR$ be
defined by $G(x)=g(x)-g(0)$. Clearly, $G$ is continuous. If $n\in
\ZZ$ then applying equation \eqref{eq2} $|n|$ times follows that for
all $c_1,c_2\in C$
\begin{align} \label{x} G(x+nu_{c_1,c_2})&=g(x+nu_{c_1,c_2})-g(0)=g(x)+nv_{c_1,c_2}-g(0) \notag   \\
&=G(x)+nv_{c_1,c_2}.
\end{align}
Every $\widehat{u} \in \widehat{U}$ can be written as $\widehat{u}=
\sum _{i=1}^{k} n_i u_{c_{i},c'_{i}}$ ($k\in \NN,~n_i\in \ZZ,~
c_i,c'_{i}\in C,~ i\in \{1,\dots, k\}$). Then \eqref{x} and $G(0)=0$
imply
\begin{equation*} G\left( \sum
_{i=1}^{k} n_i u_{c_{i},c'_{i}}\right)=\sum_{i=1}^{k} n_i
v_{c_i,c'_i}.
\end{equation*}
Thus $G$ is additive on $\widehat{U}$. Since $U$ is uncountable,
$\widehat{U}$ is dense in $\RR$. Therefore the continuity of $G$
follows that $G$ is additive. Since $G$ is an additive and continuous function,
it is of the form $G(x)=bx$ $(b \in \RR)$. Hence the non-constant function $g$ is of the form $g(x)=a+bx$, where
$b\in \RR\setminus \{0\}$ and $a=g(0)\in \RR$. If $c\in C$ is fixed
then \eqref{eq:sys} implies $a+bx=h(c)(a+bcx+u_c)+v_c$. Comparing
the coefficients of $x$ on the two sides of the equation yields
$h(c)=c^{-1}$. Therefore $g$ and $h$ are of the required form.

Suppose next that $U$ is countable and $1\notin C$. Let $c_1,c_2\in
C$ be arbitrarily fixed. The map $F\colon C\to U\times U$,
$F(c)=(u_{c_1,c},u_{c_2,c})$ has an uncountable domain and a
countable range, so it is not one-to-one. Hence there exist
$c_3,c_4\in C$, $c_3\neq c_4$ such that $F(c_3)=F(c_4)$. Therefore
$u_{c_1,c_3}=u_{c_1,c_4}$ and $u_{c_2,c_3}=u_{c_2,c_4}$, so by the definition of $u_{c_1, c_2}$
\begin{align*}
&u_{c_1}(c_3-1)-u_{c_3}(c_1-1)=u_{c_1}(c_4-1)-u_{c_4}(c_1-1), \\
&u_{c_2}(c_3-1)-u_{c_3}(c_2-1)=u_{c_2}(c_4-1)-u_{c_4}(c_2-1).
\end{align*}
Thus
\begin{align*}
u_{c_1} (c_3 - c_4) = (u_{c_3} - u_{c_4}) (c_1 - 1), \\
u_{c_2} (c_3 - c_4) = (u_{c_3} - u_{c_4}) (c_2 - 1),\\
\end{align*}
and dividing by $(c_3 - c_4) (c_1 - 1)$ or $(c_3 - c_4) (c_2 -
1)$, respectively infers
$$\frac{u_{c_1}}{c_1-1}=\frac{u_{c_3}-u_{c_4}}{c_3-c_4}=\frac{u_{c_2}}{c_2-1}.$$
As $c_1, c_2 \in C$ were arbitrary, there exists $d \in \RR$ such that $\frac{u_{c}}{c-1}=-d$ for
all $c\in C$, that is, $u_c=d (1-c)$.

Substituting this into \eqref{eq:sys} we obtain $g(x)=h(c)g(cx+d
(1-c))+v_c$. Let $g_{d}(x)=g(x+d)$, so we have $g_{d}(x-d)=h(c)
g_{d}\left(c(x-d)\right) + v_c$. Putting $x$ for $x-d$ implies
\begin{equation}
\label{e:5} g_{d}(x) = h(c)g_{d}(cx) + v_{c} \ (x\in \RR, c\in
C).
\end{equation}
Plugging in $x=0$ into this equation we obtain
$v_c=g_{d}(0)(1-h(c))$, and then writing this back to \eqref{e:5} yields
$g_{d}(x)-g_{d}(0)=h(c)\left(g_{d}(cx)-g_{d}(0)\right)$. Let
$f\colon \RR\to \RR$ be defined by $f(x)=g_{d}(x)-g_{d}(0)$. Clearly, $f$ is
continuous, and for all $c\in C$ and $x\in \RR$
\begin{equation} \label{eq3}
f(x)=h(c)f(cx).
\end{equation}
If $n\in \ZZ$ then applying \eqref{eq3} $|n|$ times follows that for
all $c\in C$ and $x\in \RR$ we have
\begin{equation} \label{4x} f(x)=h^{n}(c)f(c^{n}x). \end{equation}

Let us now examine $g|_{(-\infty, d)}$.

\noindent\textbf{Case I.} $f|_{(-\infty, 0)}\equiv 0$.

This holds iff $g_{d}|_{(-\infty, 0)}\equiv g_{d}(0)$ iff
$g|_{(-\infty, d)}\equiv g(d)$ iff $g|_{(-\infty, d)}$ is constant
(note that $g$ is continuous at $d$). Hence $g|_{(-\infty, d)}$ is
of the required form with $a = g(d), b_1 = 0$ and $s \in \RR$
arbitrary.  This clearly extends to $x=d$ as well.

\noindent\textbf{Case II.} There is an $x_0 < 0$ such that
$f(x_0)\neq 0$, that is,  $g|_{(-\infty, d)}$ is not constant.

Let us denote by $\widetilde{C}$ the multiplicative
group generated by $C$, and let $H\colon (0,\infty) \to \RR$ be defined by
$H(y)=\frac{f(yx_0)}{f(x_0)}$. Then $H$ is continuous and
\eqref{eq3} follows that $H(c)=\frac{1}{h(c)}$ for all $c\in C$.
Every $\widetilde{c} \in \widetilde{C}$ can be written as
$\widetilde{c}= \prod _{i=1}^{k} c_{i}^{n_i}$ ($k\in \NN,~n_i\in
\ZZ,~ c_i\in C,~ i\in \{1,\dots, k\}$). Then \eqref{4x} implies
\begin{align*} H\left(\prod _{i=1}^{k}
c_{i}^{n_i}\right)&=\frac{f\left(c_1^{n_1}\cdots
c_{k}^{n_{k}}x_0\right)}{f(x_0)}=\prod_{i=1}^{k} \frac{f\left(
c_1^{n_1} \cdots c_{i}^{n_i}x_0\right)}{f\left(c_1^{n_1}\cdots
c_{i-1}^{n_{i-1}}x_0\right)} \\
&= \prod _{i=1}^{k} h^{-n_i}(c_{i}),
\end{align*}
thus $H$ is multiplicative on $\widetilde{C}$. Since $C$ is
uncountable, $\widetilde{C}$ is dense in $(0,\infty)$. Therefore the
continuity of $H$ follows that $H$ is multiplicative. Since $H$ is a
non-zero continuous multiplicative function, it is of the form
$H(y)=y^s$ $(s\in \RR)$. Then, on the one hand,
$h(c)=\frac{1}{H(c)}=c^{-s}$ for all $c\in C$, so $h$ is of the
required form (later we will prove $s>0$). On the other hand, for an
arbitrary $x < 0$ letting $y = \frac{x}{x_0} > 0$ yields
\begin{align*}
g_d(x) - g_d(0) &= f(x) = f(y x_0) = f(x_0)H(y) \\
&=f(x_0)y^s = f(x_0) \left( \frac{x}{x_0} \right)^s = f(x_0)
\frac{|x|^s}{|x_0|^s}.
\end{align*}
Hence, applying the definition of $g_d$ and writing $x-d$ for
$x$ gives that for some $s \in \RR$
\begin{equation}
\label{e:g} g(x) = g(d) + \frac{f(x_0)}{|x_0|^s} |x-d|^s  \quad
(x<d),
\end{equation}
which is the required form. 
The continuity of $g$ at $d$ implies
that $s \geq 0$. Since $g|_{(-\infty, d)}$ is
not constant, we also have $b_1 \neq 0$ and $s > 0$. Then \eqref{e:g} holds for $x=d$ as well.

An analogous argument works for $x \ge d$. Since $g$ is not
constant, Case II holds either for $x \le d$ or for $x \ge d$.
Therefore  there exists an $s>0$ such that $h(c) = c^{-s}$ for every
$c \in C$, and $g$ is of the required form with $(b_1, b_2) \neq
(0,0)$. (Note that $s$ is determined by $h$, so it does not depend
on $x \le d$ or $x \ge d$, and similarly for $a$, since $a = g(d)$.)
This finishes the proof for $U$ countable and $1\in C$.

Finally, if $U$ is countable and $1\in C$ then the above argument
for $C\setminus \{1\}$ implies that $g,h|_{C\setminus \{1\}}$ are of
the required forms, so it is enough to prove that $h(1)=1$. Since
$(b_1,b_2)\neq (0,0)$, we may assume by the symmetry that $b_2\neq
0$. Applying \eqref{eq:sys} for $c=1$ and $x>\max\{d,d-u_1\}$ we
obtain $a+b_2(x-d)^s=h(1)\left(a+b_2(x+u_1-d)^s\right)+v_1$. That is
$h(1)=\lim \limits_{x\to \infty}
\frac{a-v_1+b_2(x-d)^s}{a+b_2(x+u_1-d)^s}=1$. The proof is complete.
\end{proof}

\begin{remark}
From the above theorem it is easy to get all the solutions of the system of functional equations in question,
but we did not write it out that way, since it would have made it very difficult to read.
\end{remark}

\section{Horizontal rigidity via translations} \label{s:trans}

\begin{theorem}[C.~Richter] \label{trans} Let $c \in (0, \infty) \setminus \{1\}$
be arbitrary, and let $f : \RR^n \to \RR$ be a continuous function
that is horizontally rigid for $c$ via a translation. Then $f$ is
constant.
\end{theorem}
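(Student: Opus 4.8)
The plan is to translate the geometric rigidity hypothesis into a functional equation and then exploit the fixed point of an associated affine self-map of $\RR^n$. First I would spell out what horizontal rigidity for $c$ via a translation means: there is a vector $\vec{a} \in \RR^n$ and a scalar $b \in \RR$ so that the translation $\varphi(\vec{p},t) = (\vec{p}+\vec{a},\,t+b)$ of $\RR^{n+1}$ carries $\graph(f)$ onto $\graph(f(c\cdot))$. Comparing the two parametrizations point by point, this is equivalent to
\[
f(c\vec{x}) = f(\vec{x}-\vec{a}) + b \qquad (\vec{x} \in \RR^n),
\]
and after the substitution $\vec{x} \mapsto \vec{x}+\vec{a}$ it becomes $f(A(\vec{x})) = f(\vec{x}) + b$, where $A(\vec{x}) = c\vec{x} + c\vec{a}$ is an affine map of $\RR^n$.

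The key observation is that, since $c \neq 1$, the map $A$ has a unique fixed point $\vec{p} = \tfrac{c}{1-c}\vec{a}$. Evaluating the functional equation at $\vec{x} = \vec{p}$ yields $f(\vec{p}) = f(\vec{p}) + b$, which forces $b = 0$. Hence $f$ is $A$-invariant, $f(A(\vec{x})) = f(\vec{x})$, and iterating gives $f(A^k(\vec{x})) = f(\vec{x})$ for every $k \in \ZZ$.

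Next I would record that $A$ is conjugate to a pure scaling about its fixed point: a one-line computation using $c\vec{a}=(1-c)\vec{p}$ shows $A(\vec{x}) - \vec{p} = c(\vec{x}-\vec{p})$, so that $A^k(\vec{x}) = \vec{p} + c^k(\vec{x}-\vec{p})$ for all $k \in \ZZ$. If $0 < c < 1$ then $A^k(\vec{x}) \to \vec{p}$ as $k \to +\infty$, whereas if $c > 1$ then $A^k(\vec{x}) \to \vec{p}$ as $k \to -\infty$. In either case, combining $A$-invariance with the continuity of $f$ gives
\[
f(\vec{x}) = \lim_{k} f\bigl(A^{k}(\vec{x})\bigr) = f(\vec{p}),
\]
so $f$ is constant.

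The argument is short, and the only genuine subtlety lies in the first step: correctly reading the functional equation off the isometry condition, and recognizing that the hypothesis $c \neq 1$ is precisely what produces a fixed point and thereby kills the vertical shift $b$. Everything afterward is the standard contraction-and-continuity device, run in the forward or backward direction according to whether $c < 1$ or $c > 1$.
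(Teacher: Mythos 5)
Your proof is correct, and it is essentially the argument the paper relies on: the paper does not write out a proof at all but simply cites Richter's one-variable Proposition 3 ``with the obvious modifications,'' and that argument is exactly your reduction to the functional equation $f(A(\vec{x}))=f(\vec{x})+b$, the observation that $c\neq 1$ gives $A$ a fixed point (forcing $b=0$), and the contraction-plus-continuity step. Your version has the merit of being self-contained and of making explicit where the hypothesis $c\neq 1$ is used.
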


\begin{proof} The proof of \cite[Prop. 3]{Ri} works with the obvious
modifications, just replace $x$ and $u$ by vectors.
\end{proof}

The following theorem generalizes the one-dimensional result
\cite[Thm. 6.2]{BE}. It will not be used in the sequel, but it is
interesting in its own right. Note that we do not assume continuity.

\begin{theorem}  A function $f \colon \RR^n \to \RR$ is horizontally
rigid via translations if and only if there exists a $\vec{r} \in
\RR^n$ so that $f$ is constant on every open halfline starting from
$\vec{r}$.
\end{theorem}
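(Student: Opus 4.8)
The plan is to convert rigidity into a functional equation and then into an invariance under homotheties. Writing a translation of $\RR^{n+1}$ as $(\vec{z},w)\mapsto(\vec{z}+\vec{p},w+q)$, the condition that it carries $\graph(f)$ onto $\graph(f(c\,\cdot))$ is readily seen to be equivalent to the existence, for each $c\in(0,\infty)$, of $\vec{p}_c\in\RR^n$ and $q_c\in\RR$ with
\[
f(c\vec{x})=f(\vec{x}-\vec{p}_c)+q_c \qquad (\vec{x}\in\RR^n).
\]
Substituting $\vec{x}=\vec{u}+\vec{p}_c$ rewrites this as $f(\Psi_c\vec{u})=f(\vec{u})+q_c$, where $\Psi_c(\vec{u})=c(\vec{u}+\vec{p}_c)$ is a homothety of ratio $c$. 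For $c\neq1$ the map $\Psi_c$ has a unique fixed point $\vec{r}_c=\tfrac{c\vec{p}_c}{1-c}$, so that $\Psi_c(\vec{u})=\vec{r}_c+c(\vec{u}-\vec{r}_c)$; plugging $\vec{u}=\vec{r}_c$ into the invariance relation gives $q_c=0$. Hence, for every $c\neq1$, the function $f$ is invariant under the homothety $\Psi_c$ of ratio $c$ centred at $\vec{r}_c$. (I will use only $c\neq1$; the value $c=1$ is irrelevant.)

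Granting that all these centres coincide, say $\vec{r}_c=\vec{r}$, the theorem follows at once: for $\vec{u}\neq\vec{r}$ and $t>0$ the point $\vec{r}+t(\vec{u}-\vec{r})$ equals $\Psi_t(\vec{u})$ (or $\vec{u}$ if $t=1$), so $f(\vec{r}+t(\vec{u}-\vec{r}))=f(\vec{u})$ for all $t>0$, i.e. $f$ is constant on the open halfline from $\vec{r}$ through $\vec{u}$. Conversely, if $f$ is constant on every open halfline from some $\vec{r}$, then a direct check shows $f(c\vec{x})=f(\vec{x}-\vec{p}_c)$ with $\vec{p}_c=\tfrac{1-c}{c}\vec{r}$ and $q_c=0$, which is exactly rigidity via the translation $(\vec{z},w)\mapsto(\vec{z}+\vec{p}_c,w)$; so the converse direction is immediate.

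The whole difficulty is therefore to show that the centres coincide, and this is precisely where the higher-dimensional case departs from the one-variable result \cite[Thm.~6.2]{BE}: the centre need not be unique, because $f$ may possess translational symmetries. First I would compute, for $c_1,c_2\neq1$, the value $f(c_1c_2\vec{x})$ in two ways, applying the relation (with $q=0$) first for $c_1$ then for $c_2$, and then in the reverse order. Comparing the two expressions shows that $f$ is invariant under the translation by
\[
\frac{(1-c_1)(1-c_2)}{c_1c_2}\,(\vec{r}_{c_1}-\vec{r}_{c_2}),
\]
which is nonzero exactly when $\vec{r}_{c_1}\neq\vec{r}_{c_2}$. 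Let $V\subseteq\RR^n$ be the group of all translation periods of $f$. Since conjugating the translation by $\vec{t}$ with $\Psi_c$ yields the translation by $c\vec{t}$, the subgroup $V$ is stable under multiplication by every positive scalar; being a group it is also stable under negation, hence under all real scalars, so $V$ is a linear subspace of $\RR^n$.

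Finally I would pass to the quotient $W=\RR^n/V$, on which $f$ descends to a well-defined $\bar f$ having no nonzero period and still satisfying the homothety relations $\bar f\circ\bar\Psi_c=\bar f$ with centres $\pi(\vec{r}_c)$, where $\pi\colon\RR^n\to W$ is the projection. The period computation of the previous paragraph, applied to $\bar f$, shows that the translation by $\tfrac{(1-c_1)(1-c_2)}{c_1c_2}(\pi(\vec{r}_{c_1})-\pi(\vec{r}_{c_2}))$ preserves $\bar f$; as $\bar f$ has no nonzero period, $\pi(\vec{r}_{c_1})=\pi(\vec{r}_{c_2})$ for all $c_1,c_2\neq1$, so in $W$ all centres coincide at some $\bar{\vec{r}}$. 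Applying the first two paragraphs to $\bar f$ shows it is constant on every open halfline from $\bar{\vec{r}}$, and lifting through $\pi$ (choosing any $\vec{r}$ with $\pi(\vec{r})=\bar{\vec{r}}$) gives the same for $f$ about $\vec{r}$, since the $\pi$-image of an open halfline from $\vec{r}$ is either an open halfline from $\bar{\vec{r}}$ or the single point $\bar{\vec{r}}$. The main obstacle is exactly the step isolating the translational symmetries: one must recognize that differing centres are forced to create genuine periods, and that quotienting by the resulting subspace reduces everything to the case of a unique centre. This reduction is vacuous in dimension one, where a single nonzero period already forces $f$ to be constant, which explains why the one-variable statement was simpler.
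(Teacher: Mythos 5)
Your proof is correct, and it takes a genuinely different route from the paper's. The paper argues by induction on $n$: the base case is the one-variable result \cite[Thm.~6.2]{BE}, the non-periodic case is handled by transplanting that one-dimensional proof with scalars replaced by vectors, and the periodic case is reduced to dimension $n-1$ by noting that a period $\vec{p}$ forces $c\vec{p}$ to be a period for every $c>0$, normalizing so that $f$ factors through a coordinate projection, and invoking the inductive hypothesis. You instead give a direct, self-contained argument: you rewrite rigidity as invariance under homotheties $\Psi_c$ with centres $\vec{r}_c$ (correctly getting $q_c=0$ by evaluating at the fixed point), observe that the commutator of two such homotheties is a translation by a nonzero multiple of $\vec{r}_{c_1}-\vec{r}_{c_2}$, show the period group $V$ is a linear subspace because conjugation by $\Psi_c$ dilates periods by $c$, and then quotient by all of $V$ in a single step, after which the centres must coincide and the halfline description follows; the lifting at the end is sound since the image of an open halfline from $\vec{r}$ in the quotient is either an open halfline from $\bar{\vec{r}}$ or the single point $\bar{\vec{r}}$. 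Both proofs hinge on the same obstruction --- translational periods and their scaling invariance --- but yours avoids both the induction and any appeal to the one-variable theorem, at the cost of carrying out explicitly the ``centres must coincide'' computation that the paper delegates to \cite{BE}; the paper's version buys brevity, yours buys self-containedness. (The exact constant in your period $\frac{(1-c_1)(1-c_2)}{c_1c_2}(\vec{r}_{c_1}-\vec{r}_{c_2})$ depends on which side of the commutator you factor the translation, but only its nonvanishing for $c_1,c_2\neq 1$ is used, so this is immaterial.)
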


\begin{proof}[Sketch of the proof] If $\vec{r} \in
\RR^n$ and $f$ is constant on every open halfline starting from
$\vec{r}$ then $f$ is clearly horizontally rigid via
translations.

In order to prove the nontrivial direction, we use induction on $n$. For $n=1$ the
statement is \cite[Thm. 6.2]{BE}. Suppose that the statement is true
for $n-1$.

First, assume that $f$ is not periodic. Then the proof of \cite[Thm. 6.2]{BE} applies with the
obvious modifications, we just replace $x,u,p$ and $r$ by vectors.

Finally, assume that $f$ is periodic modulo $\vec{p}\in \RR^{n}$
for some $\vec{p}\neq \vec{0}$.  As in the proof of \cite[Thm.
6.2]{BE}, we obtain that $f$ is also periodic modulo $c\vec{p}$ for
every $c\in (0,\infty)$.

Let $\varphi$ be an isometry of $\RR^n$. It is not hard to see that
if $f$ is horizontally rigid via translations then so is $f \circ
\varphi$. Similarly, if $f$ is constant on every open halfline
starting from some $\vec{r}$ then so is $f \circ \varphi$. Therefore
we may assume that $f$ is periodic modulo every vector of the form
$(0,\dots,0,x_{n+1})$. In other words, $f$ is of the form $g\circ
\pr$, where $g\colon \RR^{n-1} \to \RR$ is a function and
$\pr((x_1,\dots,x_n))=(x_1,\dots,x_{n-1})$ is the projection of
$\RR^{n}$ onto the first $n-1$ coordinates. Since $f$ is
horizontally rigid via translations, for all $c\in (0,\infty)$ there
are vectors $\vec{u}_c,\vec{v}_c\in \RR^{n}$ such that
$f(c\vec{x})=f(\vec{x}+\vec{u}_c)+\vec{v}_c$. Then clearly for all
$\vec{y}\in \RR^{n-1}$ we have $g(c\vec{y})=g(\vec{y}+
\pr(\vec{u}_c))+\pr(\vec{v}_c)$, so $g$ is also horizontally rigid
via translations. By the inductional hypothesis $g$ is of the
required form. Thus $f=g\circ \pr$ is also of the required form, and
the proof is complete.
\end{proof}

\section{Determining the possible set of directions}
\label{s:direction}

Now we start working on the case of general isometries. We follow \cite{BE2}.

Let $\SS^2 \subseteq \RR^3$ denote the unit sphere. For a function
$f\colon \RR^2 \to \RR$ let $S_f$ be the set of directions between
pairs of points on the graph of $f$, that is,

\begin{definition} \label{d:S_f}
$$S_f = \left\{ \frac{p-q}{|p-q|} \in \SS^2 : p,q \in \graph(f),\  p
\neq q \right\}.$$
\end{definition}

Recall that a \emph{great circle} is a circle line in $\RR^3$ of
radius $1$ centered at the origin. We call it \emph{vertical} if it
passes through the points $(0,0,\pm 1)$.

\begin{definition} \label{d:h} Let $h : \SS^1 \to \SS^2$ be defined as follows.
Every $\vec{x} \in \SS^1$ is in a unique half great circle
connecting $(0,0,1)$ and $(0,0,-1)$. It is not hard to see using the continuity of $f$ that
the intersection of $S_f$ with this great circle is an arc. Define $h(\vec{x})$ as the top endpoint
of this arc.
\end{definition}

Clearly, the bottom endpoint of this arc is $-h(-\vec{x})$, so the
`top function bounding the strip $S_f$ is $h(\vec{x})$ and the
bottom function is $-h(-\vec{x})$'. The coordinate functions of $h$
are denoted by $(h_1,h_2,h_3)$, where $h_3 : \SS^1 \to [-1, 1]$
encodes all information about $h$.

\begin{definition} \label{d:psi} For $c > 0$ let $\psi_c: \SS^2 \to \SS^2$ denote the map that `deforms $S_f$
according to the map $c \mapsto f(c \cdot )$', that is,
$$\psi_c((x,y,z)) =\frac{\left(\frac{x}{c},\frac{y}{c},z\right)}{|\left(\frac{x}{c},\frac{y}{c},z\right)|}
= \frac{(x,y,cz)}{|(x,y,cz)|}
 \ \ \left((x,y,z) \in \SS^2\right).$$
\end{definition}

The above equation shows that the map that `deforms $S_f$ according
to the map $c \mapsto cf$' is also $\psi_c$. This is the key
connection between vertical and horizontal rigidity. This explains
why we can replace vertically rigid functions by
horizontally rigid functions in \cite[Thm. 5.3]{BE2} and obtain the
following theorem by the same proof.

\begin{theorem} \label{t:cases} Let $C \subseteq (0, \infty)$ be a set condensating
to $\infty$, and let $f\colon \RR^2 \to \RR$ be a continuous
function horizontally rigid for $C$. Then one of the
following holds.
\begin{itemize}

\item
\textbf{Case A.} There is a vertical great circle that intersects
$S_f$ in only two points.

\item
\textbf{Case B.} $S_f = \SS^2 \setminus \{ (0,0,1), (0,0,-1) \}$.

\item
\textbf{Case C.} There exists an $\vec{x_0} \in \SS^1$ such that
$h_3(\vec{x_0}) = 0$ and $h_3(\vec{x}) = 1$ for every $\vec{x} \neq
\vec{x_0}$, that is, $S_f$ is `$\SS^2$ minus two quarters of a great
circle'.

\item
\textbf{Case D.} There exists a closed interval $I$ in $\SS^1$ with
$0 < \length(I) < \pi$ such that $h_3(\vec{x}) = 0$ if $\vec{x} \in
I$, and $h_3(\vec{x}) = 1$ if $\vec{x} \notin I$, that is, $S_f$ is
`$\SS^2$ minus two spherical triangles'.

\end{itemize}
\end{theorem}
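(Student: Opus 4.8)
The plan is to reduce the statement to the classification already carried out for vertically rigid functions in \cite[Thm. 5.3]{BE2}, exploiting the fact, emphasized just before the theorem, that both notions of rigidity deform the direction set $S_f$ by the very same map $\psi_c$. First I would record how $S_f$ transforms under $c \mapsto f(c\cdot)$. Replacing $f$ by $f(c\cdot)$ sends $\graph(f)$ to its image under the map $(x,y,z) \mapsto (x/c, y/c, z)$, so if $p,q \in \graph(f)$ the difference $p-q = (a,b,d)$ is carried to $(a/c, b/c, d)$; after normalization this is exactly $\psi_c$ applied to $(p-q)/|p-q|$. Hence $S_{\graph(f(c\cdot))} = \psi_c(S_f)$, and by Definition \ref{d:psi} this is the \emph{same} $\psi_c$ that governs the vertical case.

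Next I would turn rigidity into a self-similarity constraint on $S_f$. Every isometry of $\RR^3$ has the form $\varphi(p) = Ap + t$ with $A \in O(3)$, and since $A$ preserves norms one checks directly that $S_{\varphi(X)} = A(S_X)$ for every set $X \subseteq \RR^3$. Horizontal rigidity for $c \in C$ supplies an isometry $\varphi$ with $\varphi(\graph(f)) = \graph(f(c\cdot))$, so comparing direction sets gives an $A_c \in O(3)$ with
\[
A_c(S_f) = \psi_c(S_f) \qquad (c \in C).
\]
This is precisely the relation that drives the proof of \cite[Thm. 5.3]{BE2}: there the deforming map under $c \mapsto cf$ is, by the computation in Definition \ref{d:psi}, the identical $\psi_c$, and one obtains the identical invariance $A_c(S_f) = \psi_c(S_f)$.

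Having isolated this common constraint, I would invoke the argument of \cite[Thm. 5.3]{BE2} unchanged. The continuity of $f$ shapes $S_f$ into the strip bounded above by $h$ and below by $-h(-\cdot)$, so that $S_f$ is completely encoded by the profile function $h_3$ of Definition \ref{d:h}. The analysis — which I would take over verbatim — uses that $C$ condensates to $\infty$ to control the limiting behaviour of $\psi_c$ as $c \to \infty$ (where $\psi_c$ pushes every non-equatorial direction toward the two poles), together with the compactness of $O(3)$ to extract a limiting orthogonal map from the $A_c$; matching this against the invariance $A_c(S_f) = \psi_c(S_f)$ pins $h_3$ down to one of the four listed profiles, yielding Cases A--D. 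The main obstacle, and the only genuine content to verify, is that each step of the proof in \cite{BE2} uses the hypotheses only through (i) the continuity of $f$, (ii) the condensation of $C$, and (iii) the relation $A_c(S_f) = \psi_c(S_f)$, none of which is specific to vertical rigidity. Since all three survive the reduction above, the same proof applies and produces exactly Cases A, B, C and D.
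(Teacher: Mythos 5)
Your proposal is correct and follows the paper's own route exactly: the paper likewise observes that the identity $\left(\frac{x}{c},\frac{y}{c},z\right)/\left|\left(\frac{x}{c},\frac{y}{c},z\right)\right| = (x,y,cz)/|(x,y,cz)|$ makes the deformation of $S_f$ under $c\mapsto f(c\cdot)$ coincide with the map $\psi_c$ from the vertical case, so that the invariance $\varphi_c^{ort}(S_f)=\psi_c(S_f)$ holds verbatim and the proof of \cite[Thm.~5.3]{BE2} transfers unchanged. Your write-up just makes explicit the two routine verifications (that direction sets transform by the orthogonal part of an isometry, and that the BE2 argument uses only continuity, condensation, and this invariance) which the paper leaves implicit.
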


\section{The end of the proof} \label{s:end}

This section builds heavily on the corresponding section of \cite{BE2}.

The following lemma will be useful, the easy proof is left to the
reader.

\begin{lemma} \label{easy} Let $f\colon \RR^2 \to \RR$ be horizontally rigid for
$c_0$ via $\varphi_0$ and for $c$ via $\varphi$. Then $f(c_0 \cdot)$
is horizontally rigid for $\frac{c}{c_0}$ via $\varphi\circ
\varphi_0^{-1}$.
\end{lemma}

The following technical lemma is \cite[Lemma 2.1]{BE2}.

\begin{lemma} \label{rotation} Let $f(x,y)=g(x)+dy$, where $d>0$ and let $c>0$. If
we rotate $\graph(f)$ around the $x$-axis by the angle
$\alpha_c=\arctan (cd)-\arctan(d)$ then the intersection of this
rotated graph with the $xy$-plane is the graph of a function of the
form $y =- w_{c,d} g(x) $, where
$w_{c,d}=\frac{1}{cd}\sqrt{\frac{(cd)^2+1}{d^{2}+1}}$.
\end{lemma}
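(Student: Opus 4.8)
The plan is to reduce the lemma to an explicit computation with the standard rotation matrix together with the angle-subtraction formulas. First I would write the rotation $R_\alpha$ of $\RR^3$ around the $x$-axis by an angle $\alpha$ as
\[
R_\alpha(x,y,z) = \bigl(x,\ y\cos\alpha - z\sin\alpha,\ y\sin\alpha + z\cos\alpha\bigr),
\]
and apply it to a generic point $(x,\,y,\,g(x)+dy)$ of $\graph(f)$. Since $R_\alpha$ fixes the first coordinate, the image still projects onto the whole $x$-axis, so it suffices to determine, for each fixed $x$, which image points lie in the plane $\{z=0\}$ and what their second coordinate is. In the cross-section $\{x=\text{const}\}$ the graph is a line of slope $d$, which after rotation by $\alpha_c$ becomes a line of slope $\tan(\arctan(cd))=cd\neq 0$; such a line meets the horizontal axis $\{z=0\}$ in exactly one point, so the intersection with the $xy$-plane is genuinely the graph of a function of $x$.

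Setting the third coordinate of $R_{\alpha_c}(x,y,g(x)+dy)$ equal to zero gives a linear equation in $y$ whose unique solution is $y=-g(x)/(\tan\alpha_c+d)$, valid because $\sin\alpha_c+d\cos\alpha_c\neq 0$. Substituting this value into the second coordinate $Y=y\cos\alpha_c-(g(x)+dy)\sin\alpha_c$ and collecting terms, I expect the bracketed factor to collapse via $\cos^2\alpha_c+\sin^2\alpha_c=1$ to $1/(\sin\alpha_c+d\cos\alpha_c)$, producing the clean formula
\[
Y=\frac{-g(x)}{\sin\alpha_c+d\cos\alpha_c}.
\]
This already exhibits the intersection curve as $y=-w\,g(x)$ with $w=1/(\sin\alpha_c+d\cos\alpha_c)$.

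It then remains to identify $w$ with $w_{c,d}$. Writing $\alpha_c=\arctan(cd)-\arctan(d)$ and using $\cos\arctan t=1/\sqrt{1+t^2}$ and $\sin\arctan t=t/\sqrt{1+t^2}$, the addition formulas give $\sin\alpha_c=d(c-1)/\sqrt{(1+(cd)^2)(1+d^2)}$ and $\cos\alpha_c=(1+cd^2)/\sqrt{(1+(cd)^2)(1+d^2)}$. Forming $\sin\alpha_c+d\cos\alpha_c$, the numerator $d(c-1)+d(1+cd^2)$ telescopes to $cd(1+d^2)$, whence $\sin\alpha_c+d\cos\alpha_c=cd\sqrt{1+d^2}/\sqrt{1+(cd)^2}$ and therefore $w=\frac{1}{cd}\sqrt{\frac{(cd)^2+1}{d^2+1}}=w_{c,d}$, as claimed. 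I do not anticipate any genuine obstacle beyond the bookkeeping of these trigonometric identities; the only conceptual point needing a remark is the nonvanishing of $\sin\alpha_c+d\cos\alpha_c$, but the computation settles it immediately since this quantity equals a manifestly positive expression for $c,d>0$, which simultaneously guarantees that the intersection is a single-valued graph.
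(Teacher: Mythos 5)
Your computation is correct: the rotation matrix, the elimination of $z=0$, the collapse of the bracket to $1/(\sin\alpha_c+d\cos\alpha_c)$ via $\cos^2+\sin^2=1$, and the trigonometric simplification to $w_{c,d}$ all check out, and your sign convention for the rotation is the right one (it sends the direction $(0,1,d)$ to slope $cd$, as the paper uses later). The paper itself gives no proof of this lemma --- it cites it as \cite[Lemma 2.1]{BE2} --- and your direct coordinate verification is exactly the natural argument one would expect there.
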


\begin{remark} \label{r:rigid} Let $\varphi_c$ be the isometry mapping
$\graph(f)$ onto $\graph(f(c \cdot))$. Every isometry $\varphi$ is
of the form $\varphi^{trans} \circ \varphi^{ort}$, where
$\varphi^{ort}$ is an orthogonal transformation and
$\varphi^{trans}$ is a translation. Moreover, if $\varphi$ is
orientation-preserving then $\varphi^{ort}$ is a rotation around a
line passing through the origin. A key observation is the following:
The horizontal rigidity of $f$ for $C$ implies that $\psi_c(S_f)
=\varphi_c^{ort}(S_f)$ for every $c \in C$.
\end{remark}

Now we complete the proof of the technical form of the Main Theorem.
We repeat the statement here.

\begin{theorem}[Main Theorem, technical form] Let $C \subseteq (0, \infty)$
be a set condensating to $\infty$.
Then a continuous function $f\colon \RR^2 \to \RR$ is horizontally
rigid for $C$ if and only if it is of the form $a + bx + dy$ ($a,b,d
\in \RR$).
\end{theorem}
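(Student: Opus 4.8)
The plan is to dispatch the easy implication by a direct construction and then to reduce the hard implication to the case analysis of Theorem \ref{t:cases}, closing each case either by producing the affine form or by reaching a contradiction.

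For the ``if'' direction, if $f(x,y)=a+bx+dy$ then $\graph(f)$ is the non-vertical plane $z=a+bx+dy$ through $(0,0,a)$, and $\graph(f(c\cdot))$ is the plane $z=a+bcx+dcy$, again through $(0,0,a)$ and meeting the horizontal plane at a definite angle. Two such planes sharing the point $(0,0,a)$ are carried onto one another by a rotation about a suitable horizontal axis through $(0,0,a)$; this rotation is the desired isometry and exists for every $c>0$, so $f$ is horizontally rigid for every $C$. For the ``only if'' direction, assume $f$ is continuous and horizontally rigid for a set $C$ condensating to $\infty$, and invoke Theorem \ref{t:cases}. In \textbf{Case A} a vertical great circle meets the antipodally symmetric strip $S_f$ in only two points. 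Since $S_f$ is bounded by the graphs of $h$ and $-h(-\,\cdot\,)$ (Definition \ref{d:h}), this means the strip has collapsed to a single point along that circle; transporting this collapse around $\SS^1$ by the relation $\psi_c(S_f)=\varphi_c^{ort}(S_f)$ of Remark \ref{r:rigid}, with $c$ ranging over the rich set $C$, one shows as in \cite{BE2} that $S_f$ is a single great circle. A direction set contained in one great circle forces $\graph(f)$ into a (necessarily non-vertical) plane, so $f$ is affine.

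It remains to show that \textbf{Cases B, C, D} cannot produce a genuinely non-affine $f$. In the ``ruled'' cases \textbf{C} and \textbf{D} the shape of $S_f$ means that, after a rotation about the $z$-axis, $f(x,y)=g(x)+dy$ for some continuous, non-affine $g$ and some $d\ge 0$; the degenerate value $d=0$ gives $f=g(x)$, whose cylindrical graph is isometric to that of $g(cx)$ exactly when the one-variable $g$ is horizontally rigid, so that $g$ is affine by Theorem \ref{1dim}. Thus we may take $d>0$. Now $\graph(f)$ is a generalized cylinder with rulings in the fixed direction $(0,1,d)$, while $\graph(f(c\cdot))=\graph(g(cx)+dcy)$ is the cylinder with rulings $(0,1,cd)$. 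An isometry carrying one onto the other must map rulings to rulings, hence it maps the profile curve of $\graph(f)$ in the normal plane onto that of $\graph(f(c\cdot))$ by a planar isometry; writing out this condition (and using Lemma \ref{rotation} to track how the rotation aligning the rulings tilts the profile) produces, for suitable constants $h_c,u_c,v_c$, exactly the system $g(x)=h_c\,g(cx+u_c)+v_c$ of \eqref{eq:sys}. Since $C$ is uncountable, Theorem \ref{function} applies and forces $g$ either to be constant---whence $f=a+dy$ is affine, contradicting that we are in a genuinely curved case---or to be of the power type $a+b_1(d-x)^s$, $a+b_2(x-d)^s$ with $h_c=c^{-s}$ for a fixed $s>0$.

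The heart of the matter is to eliminate this power-type alternative, and this is exactly where the hypothesis that $C$ condensates to $\infty$ is used. The coefficient $h_c$ is not a free parameter: reading it off from the genuine Euclidean isometry $\varphi_c$ gives, by a short computation, an explicit algebraic function of $c$, for instance $h_c=\sqrt{\tfrac{1+d^2}{1+c^2d^2}}$, whose asymptotics are $h_c\sim \tfrac{\sqrt{1+d^2}}{d}\,c^{-1}$ as $c\to\infty$. Matching this against the value $h_c=c^{-s}$ demanded by Theorem \ref{function} along the arbitrarily large $c\in C$ supplied by the condensation forces simultaneously $s=1$ and $\tfrac{\sqrt{1+d^2}}{d}=1$, the latter being impossible. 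Hence no non-affine $g$ survives, and Cases C and D do not occur. \textbf{Case B}, where $S_f=\SS^2\setminus\{(0,0,\pm1)\}$, I would settle along the same lines: the equality $\psi_c(S_f)=\varphi_c^{ort}(S_f)$ forces each $\varphi_c^{ort}$ to preserve the vertical axis, which reduces $f$ to a form handled either by Theorem \ref{trans} (if some $\varphi_c$ becomes a translation, making $f$ constant) or, after the normalization permitted by Lemma \ref{easy}, by the same functional-equation argument. The main obstacle throughout is the faithful passage from the three-dimensional rigidity to the scalar system \eqref{eq:sys}---in particular computing the geometric value of $h_c$ and controlling the continuity conditions at the breakpoint $d$---after which the condensation of $C$ to $\infty$ cleanly rules out every curvature exponent $s$ and leaves only the affine functions.
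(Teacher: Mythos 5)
Your overall architecture (easy direction by an explicit rotation, hard direction via the four cases of Theorem \ref{t:cases}, with the functional-equation system \eqref{eq:sys} and Theorem \ref{function} doing the heavy lifting in the ``cylinder'' situation $f(x,y)=g(x)+dy$) matches the paper, and your two key technical points --- the reduction of the cylinder case to $g(x)=h_cg(cx+u_c)+v_c$ via Lemma \ref{rotation}, and the elimination of the power-type solutions by comparing $h_c=\sqrt{\tfrac{1+d^2}{1+c^2d^2}}$ with $c^{-s}$ --- are both correct in substance (the paper rules out the power solutions by noting the two analytic functions cannot agree on an uncountable set, which is essentially your asymptotic argument). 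However, you have attached these arguments to the wrong cases, and this creates genuine gaps.

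First, your dispatch of \textbf{Case A} is wrong. A vertical great circle meeting $S_f$ in only two points does \emph{not} imply that $S_f$ is a single great circle: it implies only that, after a rotation about the $z$-axis, $f$ restricted to every line in one fixed horizontal direction is affine with a fixed slope, i.e.\ $f(x,y)=g(x)+dy$. For a non-affine continuous $g$ the set $S_f$ is a genuinely two-dimensional strip, so your conclusion ``$S_f$ lies in one great circle, hence $\graph(f)$ is a plane'' does not follow. Case A is precisely where the cylinder form arises and where the entire functional-equation machinery (your third paragraph) must be deployed; by discarding it in two sentences you have skipped the main body of the proof. Conversely, in \textbf{Cases C and D} the set $S_f$ is ``$\SS^2$ minus two quarters of a great circle'' resp.\ ``minus two spherical triangles'', which is \emph{not} the direction set of a cylinder $g(x)+dy$ (that shape is Case A's collapsed strip), so your claim that these cases reduce to the cylinder form is unsupported. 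The paper handles C and D quite differently and much more cheaply: since $\varphi_c^{ort}$ must permute the finitely many distinguished points (arc endpoints, triangle vertices) of $S_f$, one splits $C$ into finitely many pieces, fixes $(0,0,1)$, concludes $\varphi_c^{ort}=\mathrm{id}$, and invokes Theorem \ref{trans}.

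Second, your treatment of \textbf{Case B} has a hole. After reducing each $\varphi_c$ to a planar orientation-preserving motion plus a vertical translation, the planar part may be a proper rotation about some point $\vec{x}$, and in that situation there is no cylinder structure, so ``the same functional-equation argument'' has nothing to bite on. The paper closes this subcase with a separate argument: taking $c>1$, choosing $R$ with $B(\vec{x}/c,R/c)\subseteq B(\vec{x},R)$, and comparing the distance $\delta$ between the min-set and max-set of $f$ on $B(\vec{x},R)$ with the corresponding distance $\delta/c$ for $f(c\cdot)$, which yields a contradiction unless $f$ is constant (and constancy contradicts $S_f$ being nearly the full sphere). You would need to supply this, or an equivalent, argument. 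Finally, a smaller omitted point: in the cylinder case one must first justify that the isometries $\varphi_c$ can be normalized (orientation-preserving, rulings mapped to rulings, translation part horizontal) before Lemma \ref{rotation} applies; the paper spends some effort on exactly this normalization, using Lemma \ref{easy} and the splitting of $C$.
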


\begin{proof} By Theorem \ref{t:cases} it suffices to consider Cases A-D.

\noindent\textbf{Case A.} There is a vertical great circle that
intersects $S_f$ in only two points.

We may assume using a suitable rotation around the $z$-axis that the
vertical great circle is in the $yz$-plane, hence $f(x,y)$ is of the form
$g(x) + dy$. (Note that both the class of affine functions and the class
of functions that are horizontally rigid for $C$ are closed under
rotations around the $z$-axis.) The continuity of $f$ implies that $g$
is also continuous.

\textbf{Subcase A1.} $d = 0$.

Let $c \in C$ be fixed, and let $\varphi_c$ be the corresponding
isometry. The graph of $f(c \cdot)$ is invariant under translations
parallel to the $y$-axis. As the same holds for $f$, by rigidity,
$f(c \cdot)$ is also invariant under translations parallel to the
$\varphi_c$-image of the $y$-axis. If these two directions are
nonparallel, then $\graph(f(c \cdot))$ is a plane, and hence so is
$\graph(f)$.
Therefore we may assume that all lines parallel to the $y$-axis are
taken to lines parallel to the $y$-axis, but then all planes
parallel to the $xz$-plane are taken to planes parallel to the
$xz$-plane. But this shows (by considering the intersections of the
graphs with the $xz$-plane) that $g$ is horizontally rigid for $c$,
hence $g(x)$ is of the from $a + bx$ ($a,b \in
\RR$) by Theorem \ref{1dim}, and we are done.

\textbf{Subcase A2.} $d \neq 0$.

We may assume that $d>0$, since otherwise we may consider $-f$.

For every $c \in C$ let $\varphi_c$ be the corresponding isometry.
We claim that we may assume that all these are
orientation-preserving. First, if
$\{c \in C : \varphi_c \textrm{ is orientation-preserving} \}$
condensates to $\infty$ then we are done by shrinking $C$. Otherwise,
we may assume that they are all orientation-reversing (note that if
we split $C$ into two pieces then at least one of them still
condensates to $\infty$). Let us fix a $c_0 \in C$ and consider
$c_0f$ instead of $f$. By Lemma \ref{easy} this function is rigid
for an uncountable set with all isometries orientation-preserving,
and if it is of the desired form then so is $f$, so we are done.

We may assume $1 \notin C$. Let us fix a $c \in C$. Similarly as in
the previous subcase, we may assume that lines parallel to $(0,1,d)$
are taken to lines parallel to $(0,1,cd)$ as follows. The special
form of $f$ implies that $\graph(f)$ is invariant under translations
in the $(0,1,d)$-direction, hence $\graph(f(c \cdot))$ is invariant
under translations in the $(0,1,cd)$-direction, moreover, by
rigidity, $\graph(f(c \cdot))$ is also invariant under translations
parallel to the $\varphi_c$-image of the lines of direction
$(0,1,d)$. If these two latter directions do not coincide then
$\graph(f(c \cdot))$ is a plane, and we are done.

Therefore the image of every line parallel to $(0,1,d)$ is a line
parallel to $(0,1,cd)$ under the orientation-preserving isometry
$\varphi_c$. As in Remark \ref{r:rigid}, write $\varphi_c =
\varphi_c^{trans} \circ \varphi_c^{ort}$, where $\varphi_c^{ort}$ is
a rotation about a line containing the origin and
$\varphi_c^{trans}$ is a translation. Since the translation does not
affect directions, the rotation $\varphi_c^{ort}$ takes the
direction $(0,1,d)$ to the nonparallel direction $(0,1,cd)$ ($d \neq
0$), therefore the axis of the rotation has to be orthogonal to the
plane spanned by these two directions. Hence the axis has to be the
$x$-axis. Moreover, the angle of the rotation is easily seen to be
$\arctan(cd) - \arctan(d)$.

We now show that we may assume that $\varphi_c^{trans}$ is a
horizontal translation. Decompose the translation as
$\varphi_c^{trans} = \varphi_c^{\vec{u}} \circ \varphi_c^{\vec{v}}$,
where $\varphi_c^{\vec{v}}$ is a horizontal translation and
$\varphi_c^{\vec{u}}$ is a translation in the $(0,1,cd)$-direction.
Since $\varphi_c^{ort} (\graph(f))$ is invariant under translations
in the $(0,1,cd)$-direction, so is $\varphi_c^{\vec{v}} \circ
\varphi_c^{ort} (\graph(f))$, hence
\begin{align*} \varphi_c^{\vec{v}} \circ \varphi_c^{ort} (\graph(f)) &=
\varphi_c^{\vec{u}} \circ \varphi_c^{\vec{v}} \circ \varphi_c^{ort}
(\graph(f)) \\
&= \varphi_c (\graph(f)) = \graph(f(c\cdot)),
\end{align*}
so we can assume $\varphi_c = \varphi_c^{\vec{v}} \circ
\varphi_c^{ort}$, and we are done.

Let us denote the $xy$-plane by $\{z=0\}$ and consider the
intersection of both sides of the equation $\varphi_c (\graph(f)) =
\graph(f(c  \cdot))$ with $\{z=0\}$. On the one hand,
\begin{align*} \{z=0\} \cap \varphi_c \left(\graph(f)\right) &= \{z=0\} \cap
\varphi_c^{\vec{v}} \circ \varphi_c^{ort} \left(\graph(f)\right) \\
&=\varphi_c^{\vec{v}} \left( \{z=0\} \cap \varphi_c^{ort} \left(\graph(f)\right)\right)\\
&=\varphi_c^{\vec{v}} \left( \graph \left( -w_{c,d} g
\right)\right),
\end{align*}
where we used the fact that $\varphi_c^{\vec{v}}$ is horizontal and
Lemma \ref{rotation}. On the other hand, it is easy to see that
$$\{z=0\} \cap \graph(f(c \cdot)) = \graph\left( -\frac{1}{cd} g(c
\cdot) \right).$$
If $\vec{v}=(v_{c,1}, v_{c,2})$ then we obtain for all $c\in C$
and $x\in \RR$
$$-w_{c,d} g(x-v_{c,1})+v_{c,2}=-\frac{1}{cd}g(cx).$$
For every $c \in C$ let $h_c= \frac{1}{cd
w_{c,d}}=\sqrt{\frac{d^2+1}{(cd)^2+1}}$, $u_c=cv_{c,1}$ and
$v_c=\frac{v_{c,2}}{w_{c,d}}$. Reordering the above functional
equations and substituting $x+v_{c,1}$ for $x$ yield for all $c\in
C$ and $x\in \RR$
$$g(x)=h_c g(cx+u_c)+v_c.$$
Then Theorem \ref{function} implies that either $g$ is constant or
$h_c = c^{-s}$ for every $c\in C$. But the latter cannot
hold, since $\sqrt{\frac{d^2+1}{(xd)^2+1}}$ and $x^{-s}$ are
clearly nonidentical analytic functions, so they cannot coincide on the
uncountable set $C$. Hence $g$ is constant, and the proof of Case A
is complete.

\noindent\textbf{Case B.} $S_f = \SS^2 \setminus\{(0,0,1),
(0,0,-1)\}$.

$S_f$ is invariant under every $\psi_c$, and hence so is under every
$\varphi_c^{ort}$. Then clearly $\varphi_c^{ort}((0,0,1)) = (0,0,1)$
or $\varphi_c^{ort}((0,0,1)) = (0,0,-1)$ for every $c \in C$. By the
same argument as above, we can assume that the former holds for
every $c \in C$. Using the argument again we can assume that all
$\varphi_c$'s are orientation-preserving. But then each of these is
a rotation around the $z$-axis followed by a translation, in other
words, an orientation-preserving transformation in the $xy$-plane
followed by a translation in the $z$-direction. An
orientation-preserving transformation in the plane is either a
translation or a rotation. If it is a translation for every $c$ then
we are done by Theorem \ref{trans}. So let us assume that there
exists a $c$ such that $\varphi_c$ is a proper rotation around
$\vec{x} \in \RR^2$ followed by a vertical translation. We claim
that then $f$ is constant, which will contradict that $S_f$ is
nearly the full sphere, finishing the proof of this case. We may
assume that $c>1$, since otherwise we consider $f(c \cdot)$ instead
of $f$ and replace $c$ and $\varphi_c$ by $1/c$ and
$\varphi_c^{-1}$, respectively. Let $R>0$ be so large such that
$B(\vec{x}/c,R/c) \subseteq B(\vec{x},R)$. It is enough to show that
$f$ is constant on $B(\vec{x},R)$. Assume on the contrary that this
is not the case, and let $\delta>0$ be the distance between the
compact sets where $f$ attains its minimum and maximum on
$B(\vec{x},R)$. Choose one point of the two sets each,
$\vec{x}_{min}$ and $\vec{x}_{max}$, such that
$\delta=|\vec{x}_{min}-\vec{x}_{max}|$. On the one hand, the distance
between the sets where the minimum and maximum are attained should still
be $\delta$ for $f(c \cdot)$, since it is not affected by rotations
around $\vec{x}$ or vertical translations. But on the other hand,
the function $f(c \cdot)$ takes the same values in $\vec{x}_{min}/c$
and $\vec{x}_{max}/c$ as $f$ in $\vec{x}_{min}$ and $\vec{x}_{max}$,
respectively. Therefore $\vec{x}_{min}/c,\vec{x}_{max}/c\in
B(\vec{x}/c,R/c)\subseteq B(\vec{x},R)$. Thus the distance between
the sets where $f(c \cdot)$ attains $\min_{B(\vec{x},R)} f$ and
$\max_{B(\vec{x},R)} f$ is at most
$|\vec{x}_{min}/c-\vec{x}_{max}/c|=\delta/c<\delta$, a
contradiction.

\noindent\textbf{Case C.} There exists an $\vec{x_0} \in \SS^1$ such
that $h_3(\vec{x_0}) = 0$ and $h_3(\vec{x}) = 1$ for every $\vec{x}
\neq \vec{x_0}$, that is, $S_f$ is `$\SS^2$ minus two quarters of a
great circle'.

Thus $S_f$ is invariant under every $\psi_c$, and hence so is
under every $\varphi_c^{ort}$. Hence $\varphi_c^{ort}$ maps
$(0,0,1)$ to one of the four endpoints of the two arcs. Therefore we
can assume by splitting $C$ into four pieces according to the image
of $(0,0,1)$ and applying Lemma \ref{easy} that $(0,0,1)$ is a fixed
point of every $\varphi_c^{ort}$. But then the two arcs are also
fixed, and actually $\varphi_c^{ort}$ is the identity. Hence every
$\varphi_c$ is a translation, and we are done by Theorem
\ref{trans}.

\noindent\textbf{Case D.} There exists a closed interval $I$ in
$\SS^1$ with $0 < \length(I) < \pi$ such that $h_3(\vec{x}) = 0$ if
$\vec{x} \in I$ and $h_3(\vec{x}) = 1$ if $\vec{x} \notin I$, that
is, $S_f$ is `$\SS^2$ minus two spherical triangles'.

As $S_f$ is invariant under every $\varphi_c^{ort}$, vertices of the
triangles are mapped to vertices. Hence we may assume (by splitting
$C$ into six pieces) that $(0,0,1)$ is fixed. But then the triangles
are also fixed sets, and every $\varphi_c^{ort}$ is the identity, so
we are done as in the previous case.

This finishes the proof of the Main Theorem.
\end{proof}

\section{Open questions} \label{s:open}

All three questions below are somewhat analogous to the ones
asked in \cite{BE2}.

\begin{question}
Which notion of largeness of $C$ suffices for the Main Theorem? For
example, does the Main Theorem hold if we only assume that $C$
is uncountable or $C$ generates a dense subgroup of $(0,\infty)$ or
$C$ has at least two elements different from 1?
\end{question}

\begin{question}
Let us call a set $H \subseteq \SS^2$ rigid if $\psi_c(H)$ is
isometric to $H$ for every $c > 0$. Is there a simple description of
rigid sets? Or if we assume some regularity?
\end{question}

And finally, the most intriguing problem.

\begin{question}
What can we say if there are more than two variables? Is every
continuous horizontally rigid function affine?
\end{question}

\end{document}